\documentclass{amsart}
\usepackage{amscd,amssymb,amsopn,amsmath,amsthm,graphics,amsfonts,accents,enumerate,verbatim,calc}
\usepackage[dvips]{graphicx}
\usepackage{pgfplots}
\usepackage[pagebackref,colorlinks=true,linkcolor=red,citecolor=blue]{hyperref}
\usepackage[all]{xy}
\usepackage{cite}
\usepackage{tikz}
\usepackage{tikz-cd}
\usepackage{mathrsfs}

\addtolength{\textwidth}{2cm}
\calclayout

\newcommand{\rt}{\rightarrow}
\newcommand{\lrt}{\longrightarrow}

\newcommand{\st}{\stackrel}

\newcommand{\C}{\mathbb{C}}
\newcommand{\D}{\mathbb{D}}
\newcommand{\K}{\mathbb{K}}

\newcommand{\Z}{\mathbb{Z}}

\newcommand{\SC}{\mathscr{C}}

\newcommand{\SR}{\mathscr{R}}
\newcommand{\sS}{\mathscr{S}}
\newcommand{\ST}{\mathscr{T}}

\newcommand{\Mod}{{\mbox{-}\rm{Mod}}}

\newcommand{\prj}{{\mbox{-}\rm{prj}}}

\newcommand{\Flat}{\mbox{-}{\rm{Flat}}}
\newcommand{\Cot}{\mbox{-}{\rm{Cot}}}

\newcommand{\inc}{{\rm{inc}}}

\newcommand{\Add}{{\rm{Add}}}

\newcommand{\Mor}{{\rm{Mor}}}

\newcommand{\id}{{\rm{id}}}

\newcommand{\Coker}{{\rm{Coker}}}
\newcommand{\Ker}{{\rm{Ker}}}

\newcommand{\Prj}{\mbox{-}{\rm{Prj}}}

\newcommand{\SiWC}{{\sigma\text{-}\mathcal{WC}}}
\newcommand{\SIWC}{{\Sigma\text{-}\mathcal{WC}}}

\newcommand{\SiSF}{{\sigma\text{-}\mathcal{SF}}}

\newcommand{\SiOSF}{{\sigma\text{-}}{\mathcal{OSF}}}

\newcommand{\Tor}{{\rm{Tor}}}
\newcommand{\Hom}{{\rm{Hom}}}
\newcommand{\homf}{{\mathcal{H}}om}
\newcommand{\Ext}{{\rm{Ext}}}

\newcommand{\limt}{\underset{{\longrightarrow}}{\lim}}

\theoremstyle{plain}
\newtheorem{theorem}{Theorem}[section]

\newtheorem{lemma}[theorem]{Lemma}
\newtheorem{facts}[theorem]{Facts}
\newtheorem{proposition}[theorem]{Proposition}
\newtheorem{question}[theorem]{Question}

\newtheorem{notation}[theorem]{Notation}

\theoremstyle{definition}
\newtheorem{definition}[theorem]{Definition}

\newtheorem{remark}[theorem]{Remark}
\newtheorem{setup}[theorem]{Setup}

\theoremstyle{plain}

\theoremstyle{definition}

\numberwithin{equation}{section}

\begin{document}

\title[Strongly flat modules via universal localization]{Strongly Flat Modules via Universal Localization}

\author[J. Asadollahi, R. Hafezi and S. Sadeghi]{Javad Asadollahi, Rasool Hafezi and Somayeh Sadeghi}

\address{Department of Pure Mathematics, Faculty of Mathematics and Statistics, University of Isfahan, P.O.Box: 81746-73441, Isfahan, Iran}
\email{asadollahi@sci.ui.ac.ir, asadollahi@ipm.ir }

\address{School of Mathematics and Statistics, Nanjing University of Information Science \& Technology, Nanjing, Jiangsu 210044, P.\,R. China}
\email{hafezi@nuist.edu.cn}

\address{School of Mathematics, Institute for Research in Fundamental Sciences (IPM), P.O.Box: 19395-5746, Tehran, Iran}
\email{somayeh.sadeghi@ipm.ir }

\makeatletter \@namedef{subjclassname@2020}{\textup{2020} Mathematics Subject Classification} \makeatother

\subjclass[2020]{18E35, 55P60, 16S85, 16E30}

\keywords{Universal localization, strongly flat modules, homotopy categories, adjoint functors}

\begin{abstract}
In this paper, we investigate a non-commutative version of strongly flat modules, which is based on the concept of universal localization introduced by Cohn. We consider a set $\sigma$ consisting of maps of finitely generated projective $R$-modules, where $R$ is not necessarily a commutative ring. Let $R_{\sigma}$ denote the universal localization of $R$ with respect to $\sigma$. The class of $\sigma$-strongly flat modules is defined as the left class in the cotorsion pair generated by $R_{\sigma}$. We examine the homotopy category of $\sigma$-strongly flat modules and demonstrate that the thick subcategory $\mathscr{S}_{\sigma}$, consisting of acyclic complexes, wherein all syzygies are $\sigma$-strongly flat, forms a precovering class within this homotopy category. This implies that the quotient map from $\mathbb{K}({\sigma\text{-}\mathcal{SF}})$ to $\mathbb{K}({\sigma\text{-}\mathcal{SF}})/\mathscr{S}_{\sigma}$ always has a fully faithful right adjoint.
\end{abstract}

\maketitle


\section{Introduction}
Let $R$ be a commutative ring and $S \subseteq R$ be a multiplicative subset. The class of $S$-strongly flat modules is defined as the left class in the cotorsion pair generated by $R_S$, the localization of $R$ with respect to $S$. This class of modules has been studied first by Trlifaj  \cite[Section 2]{Tr1}, for the case in which $R$ is a domain and $R_S$ is the quotient ring of $R$. Later, many authors in various papers considered arbitrary commutative rings and arbitrary multiplicative sets, even with zero-divisors.

The motivation for studying the class of $S$-strongly flat modules originated from approximation theory, specifically, the problem was to characterize the commutative rings for which this class is a covering class. Rings satisfying this property have been called $S$-almost perfect and have been characterized by many homological and ring-theoretic properties. See, for instance, \cite{BS, FS, P1, PS1, BP} and the survey article \cite{Sa}.

In the non-commutative case, this class is studied in \cite{FN1} and \cite{FN2}, when $R$ is a right Ore domain with classical right quotient ring $Q$. To this end, they considered the case of an extension of rings $R \subseteq Q$ for which the embedding $R \rt Q$ is a homological epimorphism, that is, $\varphi: R \rt S$ is an epimorphism in the category of rings and $\Tor^ R_i(Q,Q)=0$, for all $i \geq 1$, see \cite{GL}. The authors extended further results to the non-commutative case, when the modules under consideration are strongly flat, when the extension $Q$ of $R$ is the maximal flat epimorphic right ring of quotients of $R$, and when $R$ is a right noetherian right chain domain \cite{FN3}.

Our first aim in this paper is to treat the class of strongly flat modules in a more general localization theory. The full generality for non-commutative localization is that studied by Cohn \cite{Coh1} and \cite{Coh2}. Cohn localization is a significant construction in non-commutative ring theory that generalizes the concept of inverting elements to inverting matrices.

Let $R$ be a non-commutative ring (always associative with identity) and $\sigma$ be a set of maps between finitely generated projective (left) modules. Ring morphism $\varphi: R \rt R'$ is called $\sigma$-inverting if the induced morphism $R'\otimes_RP \rt R'\otimes_RQ$, is an isomorphism, for all morphisms $P \rt Q$ in $\sigma$. The collection of all $\sigma$-inverting morphisms is a category, and it was shown by Cohn \cite{Coh1} and Schofield \cite{Sch} that this category has an initial object. Its initial object, denoted by $R \rt R_{\sigma}$, or $R \rt \sigma^{-1}R$, is called the Cohn localization or the universal localization of $R$. See \cite[Theorem 2.46]{F} for a ring-theoretic description and \cite[\S 1]{Kr2} for a more categorical description of this notion.

It is known that, unlike the classical localization of commutative rings, in general $R_{\sigma}$ is not a flat $R$-module. However, if $\sigma$ admits the calculus of left fractions, then $R_{\sigma}$ is a flat $R$-module. In Section \ref{Sec 2} of the paper, we study the class of $\sigma$-strongly flat modules, investigate some of its properties, and provide a characterization in terms of the so-called $\sigma$-contramodule $R$-modules.

On the other hand, Neeman, in \cite{N2}, provided a new description for the homotopy category $\K(R\Prj)$, where $R\Prj$ denotes the category of projective $R$-modules. He demonstrated that this category is equivalent to the Verdier quotient of the homotopy category of flat $R$-modules, $\K(R\Flat)$, by a thick subcategory $\mathscr{S}$ of $\K(R\Flat)$. The subcategory $\mathscr{S}$ has been studied in deep by Neeman, see Facts 2.14 of \cite{N2} for six characterizations of it, for instance, it consists of all acyclic complexes with flat syzygies, the so-called pure acyclic complexes. Furthermore, in \cite{N3}, he showed that the quotient map from $\K(R\Flat)$ to $\K(R\Flat)/\mathscr{S}$ always has a right adjoint. This result consequently gives a new, fully faithful embedding of $\K(R\Prj)$ into $\K(R\Flat)$. To this end, he showed first that the inclusion of $\mathscr{S}$ into $\K(R\Flat)$, or more generally into $\K(R\Mod)$, has a right adjoint \cite[Theorem 3.1]{N3}.

According to \cite[Facts 2.8 (i)]{N2}, we know that the homotopy category of projective $R$-modules, denoted $ \K(R\Prj)$, is a well-generated triangulated category and thus satisfies Brown representability. Furthermore, the natural inclusion $ e: \K(R\Prj) \lrt \K(\SiSF) $ preserves coproducts. Hence, by \cite[Theorem 8.4.4]{N1}, we can conclude that the inclusion $e$ has a right adjoint $ e^*: \K(\SiSF) \lrt \K(R\Prj) $, which is also a Verdier  quotient.  Following Neeman \cite[Remark 2.12]{N3}, we denote the kernel of the functor $e^*$ by $\K(R\Prj)^{\perp}$, where orthogonal is taken in $\K(\SiSF)$. Hence, $\K(R\Prj)$  is equivalent to $\K(\SiSF)/\K(R\Prj)^{\perp}$.

Therefore, it is interesting to know the structure of objects in $\K(R\Prj)^{\perp}$. If $\K(R\Prj)$ is considered as a subcategory of $\K(R\Flat)$ and the orthogonal is taken in $\K(R\Flat)$, Neeman \cite{N2} studied this orthogonal and provides different characterizations of objects in $\K(R\Prj)^{\perp}$. In Section \ref{Sec 3}, we follow a similar argument to get characterizations for complexes in $\K(R\Prj)^{\perp}$, in case the orthogonal is taken in $\K(\SiSF)$. We show that under mild conditions, $\K(R\Prj)^{\perp}$ is equal to $\sS_\sigma$, the class of all $\sigma$-strongly flat complexes, i.e., the class of all acyclic complexes with all syzygies $\sigma$-strongly flat. Moreover, in Subsection \ref{Subsec 3.1}, it will be shown that the inclusion of $\sS_\sigma$ into either of the categories $\K(R\Flat)$ and $\K(R\Mod)$ always admits right adjoints.

Neeman, in \cite{N3}, showed that every object in $\K(R\Flat)$ admits an $\mathscr{S}$-precover. In order to constitute an $\mathscr{S}$-precover for an object $Z$ in $\K(R\Flat)$, he employed an auxiliary ring, as detailed in \cite[\S 2]{N3}. In the last section of the paper, we will utilize this auxiliary ring to construct objects in $\mathscr{S}_{\sigma}$.

Throughout, let $R$ be an associative ring with identity. Unless otherwise specified, by the $R$-module we mean a left $R$-module. The category of all left $R$-modules will be denoted by $R$-Mod.

\section{$\sigma$-strongly flat modules}\label{Sec 2}
In this section, our objective is to study the theory of strongly flat modules based on the universal localization introduced and studied by Cohn \cite{Coh1} and Schofield \cite{Sch}.

\subsection{Universal localization}
Let $\sigma$ be a set of maps of finitely generated projective $R$ modules, that is,
\[\sigma = \{s_i : P_i \rt Q_i \ | \ P_i \ {\rm and} \ Q_i \ {\rm are \ finitely \ generated \ projective} \ R\text{-}{\rm modules}\}.\]
The universal localization of $R$ with respect to $\sigma$ is a ring $R_\sigma$  with a ring homomorphism $\phi: R\rt R_\sigma$  such that
\begin{itemize}
\item[$(i)$] For every $s\in \sigma$, the morphism $R_\sigma\otimes_R s$ is an isomorphism, i.e. $\phi$ is $\sigma$-inverting.
\item[$(ii)$] For every ring homomorphism $\psi: R\lrt U$ such that $U\otimes_R s$ is an isomorphism, for all $s \in \sigma$, there exists a unique ring homomorphism $\overline{\psi}: R_\sigma\lrt U$ such that $\psi=\overline{\psi} \phi$.
\end{itemize}

For the construction of  $R_\sigma$, see \cite[Theorem 4.1]{Sch}. It is known that $\phi: R\lrt R_\sigma$  is an epimorphism of rings. The following equivalent conditions are satisfied, see \cite[Theorem 4.8]{Sch}.

\begin{itemize}
  \item [$(i)$] $\Ext^1_R = \Ext^1_{R_\sigma}$ on left $R_\sigma$ modules;
  \item [$(ii)$]  $\Tor^1_R(R_\sigma,R_\sigma) = 0$;
  \item [$(iii)$]  $\Ext^1_R = \Ext^1_{R_\sigma}$ on right $R_\sigma$ modules.
\end{itemize}
Moreover, \cite[Proposition 1.2, XI]{S} implies the following equivalent conditions.
\begin{itemize}
\item[$(i)$] $R_\sigma\otimes_R R_\sigma\lrt R_\sigma$ is an isomorphism;
\item[$(ii)$] $R_\sigma\otimes_R R_\sigma/\phi(R)=0$.
\end{itemize}

\begin{remark}\label{Classical Localization}
If $R$ is a commutative ring and $\sigma\subseteq R$ is a multiplicative set, then the classical localization of $R$ at $\sigma$ is an example of a universal localization. Indeed, we can view every element $x$ of $\sigma$ as a morphism $\lambda_x: R\lrt R$ in $R\prj$, where $R\prj$ denotes the category of finitely generated projective $R$-modules. Moreover, if $R$ is local, then the classes of universal localizations and classical localizations coincide.  However, for non-local commutative rings, the class of universal localizations is larger than that of classical localizations,  for instance, see \cite[Example 4.5]{MS}.
\end{remark}

\begin{remark}
Let $\SR$ be the full subcategory of $R\Mod$ consisting of $R$-modules $X$ such that $\Hom_R(s, X)$ is invertible for all $s\in\sigma$. Then there is an equivalence $\SR\cong R_\sigma\Mod$, \cite[\S 2.3]{Kr2}.
\end{remark}

\subsection{Calculus of fractions}
In general, the universal localization $R_\sigma$ is not a flat $R$-module.  However, if we assume that $\sigma$ admits a calculus of left (right) fractions, then $R_\sigma$ is a left (right) flat $R$-module, see Remark \ref{Flatness of Rsigma} below. In this subsection, we recall the notion of calculus of fractions, which helps to describe explicitly the localization of categories with respect to the specific class of morphisms. For more details see for instance \cite[\S 1.2]{Kr2}.

\begin{definition}\label{Def LF}
Let $\SC$ be a category and $\Mor(\SC)$ be the class of all morphisms in $\SC$. Let $S\subseteq\Mor(\SC)$. We say that the class $S$ admits a calculus of left fractions if the following conditions hold.
\begin{itemize}
  \item[$(i)$] For every object $X$ in $\SC$, the identity morphism $1_X$ is in $S$. The class $S$ is closed under compositions of morphisms.
 \item[$(ii)$] Let $X'\st{s}\leftarrow X\rt Y$ be a pair of morphisms such that $s\in S$. This can be completed to a commutative diagram
 \[\begin{tikzcd}
 &X\rar\dar{s} & Y\dar[dotted]{t}\\
 &X'\rar[dotted] &Y'
 \end{tikzcd}\]
such that $t\in S$.
 \item[$(iii)$] Let $f, g: X\rt Y$ be morphisms in $\SC$. Assume that there exists a morphism $s: X'\rt X$ in $S$ such that $f s=g s$. Then there exists a morphism $t: Y\rt Y'$ in $S$ such that $t f=t g$.
\end{itemize}
\end{definition}

Dually, we say that the class $S$ admits a calculus of right fractions if it admits a calculus of left fractions in the opposite category $\SC^{\rm{op}}$.

\begin{remark}\label{Flatness of Rsigma}
As it is mentioned in the beginning of this subsection, it is known that if $\sigma$ admits a calculus of left fractions, then $R_{\sigma}$ is a flat $R$-module, see e.g. Lemma 1.2.2 of \cite{Kr2}. In fact, in this case $R_{\sigma}$ is a colimit of the copies of $R$. Throughout, we use this fact without further reference.
\end{remark}

\subsection{$\sigma$-strongly flat modules}
Now we have the necessary background to define $\sigma$-strongly flat modules.

\begin{definition}
We say that an $R$-module $C$ is $\sigma$-weakly cotorsion if $\Ext^1_R(R_\sigma, C) = 0$. The class of $\sigma$-weakly cotorsion left $R$-modules will be denoted by $\SiWC$. An $R$-module $F$ is called $\sigma$-strongly flat if it is in ${}^{\perp}\SiWC$, where
\[{}^{\perp}\SiWC= \{M \in R\Mod  \ | \ \Ext^1_R(M,C)=0, \ {\rm for \ all} \ C \in \SiWC \}.\]
 The class of $\sigma$-strongly flat $R$-modules will be denoted by $\SiSF$. It follows from Theorem 6.11 of \cite{GT} that the pair $(\SiSF, \SiWC)$ is a complete cotorsion pair. Therefore, the class of $\SiSF$ is always precovering, while $\SiWC$ is preenveloping.
\end{definition}

\s Let $R\Flat$ denote the class of flat modules. The modules in the class
\[R\Cot=R\Flat^{\perp}=\{C \in R\Mod \ | \ \Ext^1(F,C)=0, \ {\rm for \ all} \ F \in R\Flat\}\]
are called cotorsion or Enochs cotorsion modules. It is known that $(R\Flat, R\Cot)$ is a complete cotorsion pair.

\begin{remark}\label{Flat}
Let $\sigma$ admit a calculus of left fractions. So $R_{\sigma}$ is a flat $R$-module. Hence, every (Enochs) cotorsion module is $\sigma$-weakly cotorsion. This, in turn, implies that every $\sigma$-strongly flat module is flat.
\end{remark}

\s \label{2.9}  It follows from \cite[Corollary 6.13]{GT} that the class $\SiSF$ consists of all summands of modules $G$ such that $G$ fits into an exact sequence of the form
\[0 \lrt F \lrt G \lrt L \lrt 0,\]
where $F$ is a free $R$-module and $L$ is $\{ R_\sigma \}$-filtered.

In fact, the paragraph before the statement of Lemma 6.15 of \cite{GT} and the fact that \[\Ext^1_R(P, Q)=0,\] for every projective $R_\sigma$-modules $P, Q$,  imply that a $\{R_\sigma\}$-filtered $R$-module is a free $R_\sigma$-module.
Thus every $\sigma$-strongly flat module $F$ is a direct summand of $R$-module $G$ such that $G$ fits into an exact sequence of the form
\[0 \lrt F \lrt G \lrt L \lrt 0\]
where $F$ is free $R$-module and $L$ is  a free $R_\sigma$-module, see also the paragraph above Lemma 3.4 of \cite{FN1}.

\begin{lemma}\label{Lemma 3.1[BP]}
Assume that $\sigma$ admits a calculus of left fractions. Let $M$ be a  $\sigma$-strongly flat $R$-module. Then
$R_\sigma\otimes_R M $ is a projective $R_\sigma$-module.
\end{lemma}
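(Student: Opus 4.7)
The plan is to reduce to the explicit description of $\sigma$-strongly flat modules recorded in \S\,2.9 and then apply $R_\sigma\otimes_R-$, exploiting the flatness of $R_\sigma$ that follows from the calculus of left fractions hypothesis.

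First, by \S\,2.9, since $M$ is $\sigma$-strongly flat, there exists a module $G$ of which $M$ is a direct summand, together with a short exact sequence
\[
0 \lrt F \lrt G \lrt L \lrt 0,
\]
where $F$ is a free $R$-module and $L$ is a free $R_\sigma$-module. Next, I apply the functor $R_\sigma\otimes_R-$ to this sequence. By Remark \ref{Flatness of Rsigma}, the hypothesis that $\sigma$ admits a calculus of left fractions implies that $R_\sigma$ is a flat $R$-module, so the resulting sequence
\[
0 \lrt R_\sigma\otimes_R F \lrt R_\sigma\otimes_R G \lrt R_\sigma\otimes_R L \lrt 0
\]
remains exact.

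The key identifications are as follows. Since $F$ is a free $R$-module, $R_\sigma\otimes_R F$ is a free (hence projective) $R_\sigma$-module. For the third term, writing $L\cong R_\sigma^{(I)}$ as an $R_\sigma$-module and using the canonical isomorphism $R_\sigma\otimes_R R_\sigma\cong R_\sigma$ (one of the equivalent conditions recalled above from \cite[Proposition 1.2, XI]{S}), we obtain
\[
R_\sigma\otimes_R L \;\cong\; R_\sigma\otimes_R R_\sigma^{(I)} \;\cong\; (R_\sigma\otimes_R R_\sigma)^{(I)} \;\cong\; R_\sigma^{(I)},
\]
which is a free $R_\sigma$-module.

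Since $R_\sigma\otimes_R L$ is projective over $R_\sigma$, the above short exact sequence of $R_\sigma$-modules splits, and therefore $R_\sigma\otimes_R G$ is projective (in fact free) as an $R_\sigma$-module. Finally, because $M$ is a direct summand of $G$, the module $R_\sigma\otimes_R M$ is a direct summand of $R_\sigma\otimes_R G$, hence projective over $R_\sigma$, as required. The argument is essentially routine once \S\,2.9 is in hand; the only place that really requires the calculus-of-left-fractions hypothesis is the flatness of $R_\sigma$ needed to preserve exactness, and the only subtle point is to keep the identification $R_\sigma\otimes_R L\cong L$ straight by invoking $R_\sigma\otimes_R R_\sigma\cong R_\sigma$.
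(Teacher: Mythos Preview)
Your proof is correct and follows essentially the same approach as the paper's own proof: both invoke \S\,2.9 to write $M$ as a summand of an extension of a free $R_\sigma$-module by a free $R$-module, and then tensor with $R_\sigma$. The paper's proof is a one-liner (``by tensoring the above sequence by $R_\sigma$, we get the result''), whereas you spell out the flatness of $R_\sigma$, the identification $R_\sigma\otimes_R R_\sigma\cong R_\sigma$, and the splitting argument explicitly.
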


\begin{proof}
Since $M$ is a  $\sigma$-strongly flat $R$-module  then it is a direct summand of an $R$-module $G$, where $G$ fits into the short exact sequence
\[0\lrt R^{(\alpha)}\lrt G\lrt R_{\sigma}^{(\beta)}\lrt 0.\]
Now by tensoring the above sequence by $R_\sigma$, we get the result.
\end{proof}

\begin{setup}
Toward the end of this section, let $R$ be a commutative ring and let $\sigma$ be a set of maps between finitely generated projective $R$-modules.  Note that by Remark \ref{Classical Localization}, the class of universal localizations is larger than that of classical localizations.
\end{setup}

\s An $R$-module $M$ is called $\sigma$-contramodule if it is $\sigma$-$h$-reduced and $\sigma$-weakly cotorsion, that is if,
\[\Hom_R(R_\sigma, M) = 0 =\Ext^1_R(R_\sigma, M).\] An $R$-module $M$ is called $\sigma$-$h$-divisible if it is a quotient $R$-module of an $R_\sigma$-module, or equivalently, if every homomorphism $R\lrt M$ extends to a morphism $R_\sigma\lrt M$. Every $R$-module $M$ has a unique maximal $\sigma$-$h$-divisible submodule $h_\sigma(M)\subset M$ which can be constructed as the image of the natural map $\Hom_R(R_\sigma, M)\lrt M$, see \cite[\S 1]{P2}.

Following the argument used in \cite{BP}, we denote by $K^{\bullet}$ the two-term complex of $R$-modules \[R\st{\phi}\lrt R_\sigma\]
where the nonzero $R$-modules are in degrees $-1$ and $0$. Therefore, we see the complex $K^{\bullet}$ as an object of the derived category $\D(R\Mod)$.

We use the special notation \[\Ext^n_R(K^{\bullet}, M)=\Hom_{\D(R\Mod)}(K^{\bullet}, M[n]), ~~~~n\in\Z,\]
for any $R$-module $M$.

We have a distinguished triangle
\[R\st{\phi}\lrt R_{\sigma}\lrt K^{\bullet}\lrt R[1]\]
in the derived category $\D(R\Mod)$.

By applying the functor $\Hom_{\D(R\Mod)}(-, M[*])$ on the above triangle,  we have the following exact sequence,
\begin{equation}\label{Seq: I}
0\lrt \Ext^0_R(K^{\bullet}, M)\lrt \Hom_R(R_\sigma, M)\lrt M\lrt\Ext^1_R(K^{\bullet}, M)\lrt \Ext^1_R(R_\sigma, M)\lrt 0, \tag{I}
\end{equation}
which can be rewritten in the form of two short exact sequences
\begin{equation}\label{Seq: II}
0\lrt \Ext^0_R(K^{\bullet}, M)\lrt \Hom_R(R_\sigma, M)\lrt h_\sigma(M)\lrt 0,\tag{II}
\end{equation}
\begin{equation}\label{Seq: III}
0\lrt M/h_\sigma(M)\lrt \Ext^1_R(K^{\bullet}, M)\lrt \Ext^1_R(R_\sigma, M)\lrt 0.\tag{III}
\end{equation}

Using a similar argument as in the proof of \cite[Lemma 1.7 (c) and Lemma 2.4]{P2}, we can state the following lemma. Therefore, we will omit the proof.

\begin{lemma}\label{Lemma 1.7(c) and Lemma 2.4(1)}
The following holds true.
\begin{itemize}

\item[$(i)$] Assume that the projective dimension of $R_\sigma$ as an $R$-module is at most 1. Then, for every $R$-module $X$, the $R$-module $\Ext^1_R(K^{\bullet}, X)$ is  $\sigma$-contramodule.

\item[$(ii)$] Assume that  $M$ is  an $R$-module such that $R_\sigma\otimes_R M=0$.  Then, for every $R$-module $X$, the $R$-module $\Hom_R(M, X)$ is  $\sigma$-contramodule.
\end{itemize}
\end{lemma}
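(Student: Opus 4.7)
The plan is to reduce both parts to applications of the (derived) Hom-tensor adjunction, organized around the distinguished triangle $R \st{\phi}\lrt R_\sigma \lrt K^\bullet \lrt R[1]$ in $\D(R\Mod)$ set up earlier in the section.

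For part $(i)$, the cornerstone is the derived vanishing $R_\sigma \otimes^{\mathbf{L}}_R K^\bullet \simeq 0$. Tensoring the triangle above with $R_\sigma$ produces $R_\sigma \lrt R_\sigma \otimes^{\mathbf{L}}_R R_\sigma \lrt R_\sigma \otimes^{\mathbf{L}}_R K^\bullet$. Schofield's equivalent conditions recalled in the subsection on universal localization, namely $R_\sigma \otimes_R R_\sigma \cong R_\sigma$ and $\Tor^1_R(R_\sigma, R_\sigma) = 0$, together with the hypothesis $\pd_R R_\sigma \leq 1$ (which kills all higher Tor), give $R_\sigma \otimes^{\mathbf{L}}_R R_\sigma \simeq R_\sigma$ via the multiplication map, whence $R_\sigma \otimes^{\mathbf{L}}_R K^\bullet \simeq 0$. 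Derived adjunction then yields
$$\mathbf{R}\Hom_R(R_\sigma, \mathbf{R}\Hom_R(K^\bullet, X)) \simeq \mathbf{R}\Hom_R(R_\sigma \otimes^{\mathbf{L}}_R K^\bullet, X) \simeq 0.$$
I then extract the desired Ext vanishings from the hypercohomology spectral sequence
$$E_2^{p,q} = \Ext^p_R(R_\sigma, \Ext^q_R(K^\bullet, X)) \Longrightarrow 0.$$
The hypothesis $\pd_R R_\sigma \leq 1$ kills the rows $p \geq 2$, and it also forces $\pd_R K^\bullet \leq 1$ (a length-one projective resolution of $R_\sigma$, together with the triangle above, gives a length-one projective resolution of $K^\bullet$), so the columns $q \geq 2$ vanish as well. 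The page is confined to the $2 \times 2$ square $p, q \in \{0,1\}$, on which every $d_2$ lands outside; hence $E_\infty = E_2$, and since the abutment is zero each entry in the square vanishes. This gives in particular $\Hom_R(R_\sigma, \Ext^1_R(K^\bullet, X)) = 0$ and $\Ext^1_R(R_\sigma, \Ext^1_R(K^\bullet, X)) = 0$, which is precisely the $\sigma$-contramodule condition.

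For part $(ii)$, the $\sigma$-$h$-reduced condition is immediate from the ordinary Hom-tensor adjunction:
$$\Hom_R(R_\sigma, \Hom_R(M, X)) \cong \Hom_R(R_\sigma \otimes_R M, X) = 0.$$
For the $\sigma$-weakly cotorsion condition $\Ext^1_R(R_\sigma, \Hom_R(M, X)) = 0$, I take a projective resolution $P_\bu \lrt R_\sigma$ and identify $\Hom_R(P_i, \Hom_R(M, X)) \cong \Hom_R(P_i \otimes_R M, X)$ degreewise, so the Ext group in question is the first cohomology of $\Hom_R(P_\bu \otimes_R M, X)$. The complex $P_\bu \otimes_R M$ computes $\Tor^R_\ast(R_\sigma, M)$; invoking the flatness of $R_\sigma$ that pervades the section via the calculus of left fractions (cf.\ Remark \ref{Flat}), all higher Tors vanish, and combined with $R_\sigma \otimes_R M = 0$ this makes $P_\bu \otimes_R M$ exact. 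Its $\Hom_R(-, X)$-cohomology vanishes at the relevant spot, giving the desired Ext vanishing.

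The main obstacle is the spectral-sequence bookkeeping in part $(i)$: the whole argument rests on confining the $E_2$-page to a $2 \times 2$ grid, which depends critically on both $\pd_R R_\sigma \leq 1$ (for the rows) and the induced projective-dimension bound on $K^\bullet$ (for the columns). The more conceptual input, the derived vanishing $R_\sigma \otimes^{\mathbf{L}}_R K^\bullet \simeq 0$, is comparatively clean once Schofield's conditions are in hand. For part $(ii)$, the delicate point is that the Hom-tensor adjunction is not derived in the second argument, so the Ext vanishing genuinely needs flatness of $R_\sigma$ as an auxiliary input; without such Tor vanishing, contributions from $\Tor^i_R(R_\sigma, M)$ cannot be ruled out.
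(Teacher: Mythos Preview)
Your proof is correct. The paper omits its own argument entirely, deferring to Positselski \cite{P2}, whose proofs (in the classical-localization setting where $R_\sigma=S^{-1}R$ is automatically flat) are essentially what you have written out: derived tensor--Hom adjunction together with the vanishing $R_\sigma\otimes^{\mathbf L}_R K^\bullet\simeq 0$ for part~$(i)$, and ordinary tensor--Hom adjunction for part~$(ii)$. Your spectral-sequence organisation of $(i)$ is clean, and the $2\times 2$ confinement forcing $E_2=E_\infty=0$ is exactly right.

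Two remarks are worth making. First, you invoke flatness of $R_\sigma$ in part~$(ii)$, and you are right that it is needed for your argument: without it $P_\bullet\otimes_R M$ need not be acyclic. This hypothesis is \emph{not} part of the Setup governing the lemma in the paper (only commutativity of $R$ is imposed there), so strictly speaking the lemma as stated is missing an assumption that the referenced proof in \cite{P2} uses freely. Since the paper only ever applies the lemma downstream in Section~\ref{Sec 3}, where the calculus of left fractions is in force, nothing breaks; but your flag is well taken and arguably identifies a small gap in the paper's formulation.

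Second, the final step of $(ii)$ --- that acyclicity of the bounded-below complex $P_\bullet\otimes_R M$ forces $H^1\bigl(\Hom_R(P_\bullet\otimes_R M,X)\bigr)=0$ --- is true but not quite immediate, since acyclic does not mean contractible. It follows from a one-line chase: a cocycle $f\colon P_1\otimes M\to X$ annihilates $\im(d_2\otimes 1)=\Ker(d_1\otimes 1)$ (acyclicity in degree~$1$) and therefore factors through $P_0\otimes M$, because $d_1\otimes 1$ is surjective (acyclicity in degree~$0$). Adding this sentence would make the argument self-contained.
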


The following proposition provides a characterization of $\sigma$-strongly flat modules. Its proof is similar to the proof of Lemma 7.2 of \cite{BP}, which proves a similar result in the classical setting. For the reader's convenience, we provide a proof.

\begin{proposition}\label{Lemma 7.2[BP]}
Assume that the projective dimension of $R_\sigma$ as an $R$-module is at most 1. Let $M$ be an $R$-module such that $R_\sigma\otimes_R M$ is a projective $R_\sigma$-module. Then the $R$-module $M$ is $\sigma$-strongly flat if and only if $\Ext^1_R(M, X)=0=\Ext^2_R(M,X)$, for every $\sigma$-contramodule $R$-module $X$.
\end{proposition}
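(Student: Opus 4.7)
The plan is to prove the two implications separately. The forward direction uses only the structure statement in paragraph 2.9 together with $\pd_R R_\sigma\le 1$; the reverse direction is a two-step reduction via sequences \eqref{Seq: II} and \eqref{Seq: III}, followed by a change-of-rings argument that puts the projectivity of $R_\sigma\otimes_R M$ to work.

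For the ``only if'' direction, suppose $M\in\SiSF$. By 2.9, $M$ is a direct summand of some $G$ fitting into
\[
0\to R^{(\alpha)}\to G\to R_\sigma^{(\beta)}\to 0.
\]
Every $\sigma$-contramodule $X$ is $\sigma$-weakly cotorsion, so $\Ext^1_R(M,X)=0$ is immediate from the definition of $\SiSF$. The long exact sequence of Ext applied to the displayed short exact sequence reduces $\Ext^2_R(G,X)=0$ to $\Ext^2_R(R^{(\alpha)},X)=0$ (clear, as $R^{(\alpha)}$ is projective) and $\Ext^2_R(R_\sigma^{(\beta)},X)=0$ (immediate from $\pd_R R_\sigma\le 1$); the conclusion for $M$ then follows by passing to a direct summand.

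For the converse, let $C\in\SiWC$. Sequence \eqref{Seq: III} together with $\Ext^1_R(R_\sigma,C)=0$ identifies $C/h_\sigma(C)\cong\Ext^1_R(K^{\bullet},C)$, which is a $\sigma$-contramodule by Lemma \ref{Lemma 1.7(c) and Lemma 2.4(1)}(i). The Ext hypothesis on $M$ therefore kills $\Ext^1_R(M,C/h_\sigma(C))$, and the long exact sequence attached to $0\to h_\sigma(C)\to C\to C/h_\sigma(C)\to 0$ reduces the goal to $\Ext^1_R(M,h_\sigma(C))=0$. A computation parallel to Lemma \ref{Lemma 1.7(c) and Lemma 2.4(1)}(i) shows that $\Ext^0_R(K^{\bullet},C)=\Ker(\Hom_R(R_\sigma,C)\to C)$ is also a $\sigma$-contramodule; applying $\Ext^*_R(M,-)$ to \eqref{Seq: II} and using both the $\Ext^1$- and the $\Ext^2$-hypothesis on $M$ then yields $\Ext^1_R(M,h_\sigma(C))\cong\Ext^1_R(M,\Hom_R(R_\sigma,C))$.

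The remaining vanishing $\Ext^1_R(M,\Hom_R(R_\sigma,C))=0$ is the crux, and the step I expect to be the main obstacle. The plan is to exploit that $\Hom_R(R_\sigma,C)$ is naturally an $R_\sigma$-module and invoke the change-of-rings spectral sequence
\[
E_2^{p,q}=\Ext^p_{R_\sigma}(\Tor^R_q(R_\sigma,M),\Hom_R(R_\sigma,C))\Longrightarrow\Ext^{p+q}_R(M,\Hom_R(R_\sigma,C)).
\]
The projectivity of $R_\sigma\otimes_R M$ kills $E_2^{1,0}$, and the bound $\pd_R R_\sigma\le 1$ annihilates all terms with $q\ge 2$; via the adjunction $\Hom_{R_\sigma}(T,\Hom_R(R_\sigma,C))\cong\Hom_R(T,C)$ for $T$ an $R_\sigma$-module, the surviving $E_2^{0,1}$-contribution at total degree one becomes $\Hom_R(\Tor^R_1(R_\sigma,M),C)$. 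Handling this last Hom is the delicate point; I would use the Tor sequence of a special $\SiSF$-precover $0\to C_0\to F\to M\to 0$ of $M$, where $\Tor^R_1(R_\sigma,F)=0$ follows from the forward direction, together with the projectivity of $R_\sigma\otimes_R M$, to pin down $\Tor^R_1(R_\sigma,M)$, and then close the argument by invoking Lemma \ref{Lemma 1.7(c) and Lemma 2.4(1)}(ii) in combination with the $\sigma$-weakly cotorsion property of $C$.
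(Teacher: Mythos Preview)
Your overall strategy coincides with the paper's: both directions rest on the structure description in 2.9, and for the converse you reduce, via the sequences \eqref{Seq: II} and \eqref{Seq: III} (renumbered (1) and (2) in the paper), first to $\Ext^1_R(M,h_\sigma(C))=0$ and then to $\Ext^1_R(M,\Hom_R(R_\sigma,C))=0$.

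There are two points of divergence. First, for the left end of \eqref{Seq: II} the paper does not run a ``computation parallel to Lemma~\ref{Lemma 1.7(c) and Lemma 2.4(1)}(i)''; it instead identifies $\Ext^0_R(K^\bullet,C)\cong\Hom_R(R_\sigma/\phi(R),C)$ and applies Lemma~\ref{Lemma 1.7(c) and Lemma 2.4(1)}(ii), using $R_\sigma\otimes_R R_\sigma/\phi(R)=0$. Your alternative would also work, but the paper's route is the more direct one.

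Second, and this is the substantive difference, the paper disposes of $\Ext^1_R(M,\Hom_R(R_\sigma,C))$ in a single line: since $\Hom_R(R_\sigma,C)$ is an $R_\sigma$-module and $R_\sigma\otimes_R M$ is $R_\sigma$-projective, the change-of-rings adjunction gives the vanishing immediately. Your spectral sequence is a derived unpacking of that same adjunction, but it leaves the term $E_2^{0,1}\cong\Hom_R(\Tor^R_1(R_\sigma,M),C)$ to be killed, and your plan for this step does not close. Invoking Lemma~\ref{Lemma 1.7(c) and Lemma 2.4(1)}(ii) requires $R_\sigma\otimes_R(-)=0$, but $\Tor^R_1(R_\sigma,M)$ is an $R_\sigma$-module, so $R_\sigma\otimes_R\Tor^R_1(R_\sigma,M)\cong\Tor^R_1(R_\sigma,M)$ itself; the lemma says nothing here, and the $\SiSF$-precover you introduce does not force this Tor to vanish either. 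In the paper's applications (Theorem~\ref{Characterization-I}) the module $M$ in question is flat, and in the classical setting of \cite{BP} on which the argument is modelled $R_\sigma$ is flat over $R$; in either case $\Tor^R_1(R_\sigma,M)=0$, the $E_2^{0,1}$ term disappears, and the paper's one-line argument goes through without your detour.
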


\begin{proof}
Since the projective dimension of $R_\sigma$ is at most 1, it follows from \ref{2.9} that the projective dimension of every $R_\sigma$-strongly flat $R$-module is also at most 1. Thus, the necessity condition follows.

To demonstrate the sufficient condition, let $C$ be a $\sigma$-weakly cotorsion $R$-module. We aim to show that $\Ext^1_R(M, C)=0$.
Consider short exact sequences
\begin{equation}\label{Seq: 1}
 0\lrt \Hom_R(R_\sigma/\phi(R), C)\lrt \Hom_R(R_\sigma, C)\lrt h_\sigma(C)\lrt 0\tag{1},
\end{equation}
and
\begin{equation}\label{Seq: 2}
0\lrt h_\sigma(C)\lrt C \lrt C/h_\sigma(C)\lrt 0\tag{2},
\end{equation}
where the short exact sequence \eqref{Seq: 1} is obtained by using the isomorphism $\Ext^0_R(K^\bullet, C)\cong \Hom_R(R_\sigma/\phi(R), C)$ of the short exact sequence (\ref{Seq: II}).
Since $C$ is $\sigma$-weakly cotorsion, we conclude from the sequence \eqref{Seq: I} that
\[C/h_\sigma(C)\cong \Ext^1_R(K^\bullet, C).\]
Since projective dimension of $R_\sigma$ is at most 1, Lemma \ref{Lemma 1.7(c) and Lemma 2.4(1)} $(i)$ implies that $C/h_\sigma(C)$ is $\sigma$-contramodule.
Moreover, since $R_\sigma\otimes_R R_\sigma/\phi(R)=0$,  Lemma \ref{Lemma 1.7(c) and Lemma 2.4(1)} $(ii)$ implies that the $R$-module $\Hom_R(R_\sigma/\phi(R), C)$ is a $\sigma$-contramodule.
Therefore, by assumption \[\Ext^1_R(M, C/h_\sigma(C))=0=\Ext^2_R(M, \Hom_R(R_\sigma/\phi(R), C)).\]

On the other hand, from the exact sequence \eqref{Seq: 1}, we get
\[\Ext^1_R(M, \Hom_R(R_\sigma, C))\lrt\Ext^1_R(M, h_\sigma(C))\lrt \Ext^2_R(M, \Hom_R(R_\sigma/\phi(R), C))=0.\]
But since by the assumption $R_\sigma\otimes_R M$ is projective, $\Ext^1_R(M, \Hom_R(R_\sigma, C))=0$. Thus, we obtain $\Ext^1_R(M, h_\sigma(C))=0$. Now from the exact sequence \eqref{Seq: 2}, we get the exact sequence
\[0=\Ext^1_R(M, h_\sigma(C))\lrt \Ext^1_R(M, C)\lrt \Ext^1_R(M, C/h_\sigma(C))=0.\]
Hence, the result follows, that is,  $\Ext^1_R(M, C)=0$.
\end{proof}

\section{Homotopy category of $\sigma$-strongly flat modules}\label{Sec 3}
According to \cite[Facts 2.8 (i)]{N2}, we know that the homotopy category of projective $R$-modules, denoted $ \K(R\Prj)$, is a well-generated triangulated category and thus satisfies Brown representability. Furthermore, the natural inclusion $ e: \K(R\Prj) \lrt \K(\SiSF) $ preserves coproducts. Hence, by \cite[Theorem 8.4.4]{N1}, we can conclude that the inclusion $e$ has a right adjoint $ e^*: \K(\SiSF) \lrt \K(R\Prj) $, which is also a Verdier  quotient.  Following Neeman \cite[Remark 2.12]{N3}, we denote the kernel of the functor $e^*$ by $\K(R\Prj)^{\perp}$, where orthogonal is taken in $\K(\SiSF)$. Hence, $\K(R\Prj)$  is equivalent to $\K(\SiSF)/\K(R\Prj)^{\perp}$.

Therefore, it is interesting to know the structure of objects in $\K(R\Prj)^{\perp}$. If $\K(R\Prj)$ is considered as a subcategory of $\K(R\Flat)$ and the orthogonal is taken in $\K(R\Flat)$, Neeman provided six characterizations of objects in $\K(R\Prj)^{\perp}$, see \cite[Facts 2.14]{N2}. Here we follow a similar argument to get characterizations for complexes in $\K(R\Prj)^{\perp}$, in case the orthogonal is taken in $\K(\SiSF)$.
We need the following remark.

\begin{remark}\label{Remark 2.15 Neeman}
It is proved by Neeman \cite[Remark 2.15]{N2} that every acyclic complex of projective modules with flat kernels is contractible.
\end{remark}

\setup
 In this section,  let $R$ be a commutative ring, let $\sigma$ be a set of maps between finitely generated projective $R$-modules and assume that $\sigma$ admits a calculus of left fractions.

\begin{theorem}\label{Characterization-I}
Assume that the projective dimension of $R_\sigma$ as an $R$-module is at most 1, and all flat $R$-modules have finite projective dimensions. Let $Z$ be an object of $\K(\SiSF)$. The following are equivalent.
\begin{itemize}
\item[$(i)$] $Z$ lies in the subcategory $\K(R\Prj)^{\perp}$ of $\K(\SiSF)$.
\item[$(ii)$] $Z$ is an acyclic complex of $\sigma$-strongly flat $R$-modules
\[\cdots\st{\partial^{i-1}}\lrt Z^{i-1}\st{\partial^{i}}\lrt Z^{i}\st{\partial^{i+1}}\lrt Z^{i+1}\lrt \cdots\]
such that the kernels $K^i$ of the maps $\partial^i$ are all $\sigma$-strongly flat $R$-modules.
\end{itemize}
\end{theorem}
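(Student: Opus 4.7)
My plan is to reduce the characterization to Neeman's analogous result for $\K(R\Prj)^{\perp}$ computed inside $\K(R\Flat)$ (see \cite[Facts 2.14]{N2}), and then to strengthen ``flat syzygies'' to ``$\sigma$-strongly flat syzygies'' by means of Proposition \ref{Lemma 7.2[BP]}. The enabling preliminary remark is that by Remark \ref{Flat} we have full inclusions $\K(R\Prj)\subseteq\K(\SiSF)\subseteq\K(R\Flat)$, and since $\Hom$-sets in any of these homotopy categories are just chain maps modulo null-homotopy, membership of $Z\in\K(\SiSF)$ in $\K(R\Prj)^{\perp}$ is independent of whether the orthogonal is computed in $\K(\SiSF)$ or in $\K(R\Flat)$.

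Granted this preliminary remark, the direction $(ii)\Rightarrow(i)$ is immediate: a $Z$ satisfying $(ii)$ has flat syzygies by Remark \ref{Flat}, hence is pure acyclic, hence lies in $\K(R\Prj)^{\perp}$ by \cite[Facts 2.14]{N2}. For $(i)\Rightarrow(ii)$, acyclicity of $Z$ follows from $\Hom_{\K}(R[n],Z)\cong H^{-n}(Z)=0$ using the stalk complex $R[n]\in\K(R\Prj)$, and flatness of the syzygies $K^i$ follows from the same reduction to \cite[Facts 2.14]{N2}. It remains to upgrade each flat syzygy $K^i$ to a $\sigma$-strongly flat module.

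For this upgrade I would verify the two hypotheses of Proposition \ref{Lemma 7.2[BP]}. First, since $R_\sigma$ is flat (Remark \ref{Flatness of Rsigma}), the complex $R_\sigma\otimes_R Z$ is acyclic with projective $R_\sigma$-module terms (Lemma \ref{Lemma 3.1[BP]}) and with syzygies $R_\sigma\otimes_R K^i$ that are flat over $R_\sigma$ (flat base change of flat modules). Remark \ref{Remark 2.15 Neeman} then forces this complex to be contractible, so each $R_\sigma\otimes_R K^i$ is a direct summand of a projective $R_\sigma$-module, hence projective. Second, given a $\sigma$-contramodule $X$, each $Z^j$ satisfies $\Ext^1_R(Z^j,X)=0$ (as $Z^j\in\SiSF$ and $X\in\SiWC$) and $\Ext^{\geq 2}_R(Z^j,X)=0$ (as $\pd_R Z^j\leq 1$ by \ref{2.9}, using $\pd_R R_\sigma\leq 1$). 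Applying the long exact $\Ext$ sequence to $0\to K^j\to Z^j\to K^{j+1}\to 0$ therefore yields dimension shifts $\Ext^k_R(K^j,X)\cong\Ext^{k+1}_R(K^{j+1},X)$ for every $k\geq 1$, and iterating gives $\Ext^k_R(K^i,X)\cong\Ext^{k+n}_R(K^{i+n},X)$ for all $n\geq 0$.

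The anticipated obstacle is closing off this iteration: the hypothesis that every flat $R$-module has finite projective dimension must be converted into a uniform bound on $\pd_R$ of flats, so that for $n$ large enough $k+n$ exceeds $\pd_R K^{i+n}$ and $\Ext^{k+n}_R(K^{i+n},X)=0$ for $k=1,2$. This is the only non-formal ingredient in the argument, and should be available via a Jensen/Raynaud--Gruson-type theorem under the standing hypotheses on $R$; the rest of the proof is a direct assembly of the earlier lemmas with Neeman's characterization.
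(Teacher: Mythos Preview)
Your approach matches the paper's exactly: both reduce via $\K(\SiSF)\subset\K(R\Flat)$ to Neeman's characterization \cite[Theorem 8.6]{N2}, invoke Lemma \ref{Lemma 3.1[BP]} together with Remark \ref{Remark 2.15 Neeman} to obtain projectivity of $R_\sigma\otimes_R K^i$, and then finish with Proposition \ref{Lemma 7.2[BP]} after a dimension-shift along the short exact sequences $0\to K^j\to Z^{j-1}\to K^{j+1}\to 0$. The obstacle you flag dissolves without Jensen or Raynaud--Gruson: if every flat $R$-module has finite projective dimension then a uniform bound is automatic, since a direct sum $\bigoplus_n F_n$ of flat modules is flat and satisfies $\pd_R\bigl(\bigoplus_n F_n\bigr)=\sup_n\pd_R F_n$, so the absence of a bound would produce a flat module of infinite projective dimension.
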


\begin{proof}
$(i)\Rightarrow (ii)$. Since $\K(\SiSF) \subset \K(R\Flat)$, it follows that $Z \in \K(R\Flat)$. Therefore, it follows from \cite[Theorem 8.6]{N2}, that the complex $Z $ is an acyclic complex of $\sigma$-strongly flat $ R$-modules,
\[\cdots\st{\partial^{i-1}}\lrt Z^{i-1}\st{\partial^{i}}\lrt Z^{i}\st{\partial^{i+1}}\lrt Z^{i+1}\lrt \cdots\]
where the kernels $K^i$ of the maps $\partial^i$ are all flat $R$-modules. To complete the proof, we still need to demonstrate that all the kernels $ K^i $ are $\sigma$-strongly flat. To do this, we use Proposition \ref{Lemma 7.2[BP]}.
First observe that  since the module $Z^i$ for each $i$, is $\sigma$-strongly flat $R$-module, by Lemma \ref{Lemma 3.1[BP]}, $R_\sigma\otimes Z^i$ is a projective $R_\sigma$-module. Therefore, we obtain an acyclic complex
\[\cdots\st{}\lrt R_\sigma\otimes_R Z^{i-1}\st{}\lrt R_\sigma\otimes_R Z^{i}\st{}\lrt R_\sigma\otimes_R Z^{i+1}\lrt \cdots\]
of projective $R_\sigma$-modules such that all kernels are flat $R_\sigma$-modules. Consequently, as stated in  Remark \ref{Remark 2.15 Neeman}, this complex is contractible and hence, for each $i$, $R_\sigma\otimes_R \Ker\partial^i$ is a projective $R_\sigma$-module. Since the projective dimensions of flat $R$-modules are finite, we get that $\Ext^1_R(\Ker\partial^i, X)=0=\Ext^2_R(\Ker\partial^i, X)$ for every $\sigma$-contramodule $X$ and for  each $i$. Thus, for each $i$, $\Ker\partial^i$ is a $\sigma$-strongly flat $R$-module.

$(ii)\Rightarrow (i)$ Since $\SiSF\subset R\Flat$, the result follows by a similar argument as in the proof of $(iii)\Rightarrow (i)$ of \cite[Theorem 8.6]{N2}.
\end{proof}

The above theorem is proved under the additional assumption that the projective dimension of flat modules is finite. To eliminate this assumption, and inspired by \cite[1.1]{PS1}, we introduce a broader category than that of $\sigma$-strongly flat modules. We refer to this new class as optimistically $\sigma$-strongly flat $R$-modules, denoted by $\SiOSF$.

\begin{definition}
A flat $R$-module $F$ is called optimistically $\sigma$-strongly flat if $R_\sigma\otimes_R F$ is a projective $R_\sigma$-module and, for every $s\in \sigma$,  $\Coker s\otimes_R F\in \Add(\Coker s)$.
\end{definition}

\begin{remark}
Let $R$ be a commutative ring and $S$ be a multiplicatively closed subset of $R$. It is known that the classical localization of $R$ at $S$ is a universal localization, see Remark \ref{Classical Localization}. In this case, if we consider every element of $\sigma$ as a morphism $\lambda_x: R\lrt R$ in $R\prj$, then optimistically $\sigma$-strongly flat modules are exactly optimistically $S$-strongly flat modules, which were defined and studied in \cite{AS}.
\end{remark}

\begin{lemma}
Let $s: P\lrt Q$ be a morphism in $\sigma$. Then $R_\sigma\otimes_R\Coker s=0$.
\end{lemma}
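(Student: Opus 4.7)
The plan is to use the right exactness of the tensor product together with the defining property of the universal localization map $\phi: R \to R_\sigma$, namely that it is $\sigma$-inverting. Starting from the morphism $s: P \to Q$ in $\sigma$, I first form the short exact sequence
\[
P \st{s}\lrt Q \lrt \Coker s \lrt 0
\]
and apply the right-exact functor $R_\sigma \otimes_R -$. This yields an exact sequence
\[
R_\sigma \otimes_R P \st{R_\sigma \otimes_R s}\lrt R_\sigma \otimes_R Q \lrt R_\sigma \otimes_R \Coker s \lrt 0.
\]

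Next I invoke condition $(i)$ in the definition of the universal localization: because $\phi$ is $\sigma$-inverting, the induced map $R_\sigma \otimes_R s$ is an isomorphism, and in particular surjective. By exactness, the cokernel of this map is zero, whence $R_\sigma \otimes_R \Coker s = 0$.

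There is no real obstacle here; the statement is a direct unwinding of the universal property of $R_\sigma$ and the right exactness of tensor product. The only thing to be slightly careful about is that $s$ need not be injective, but this is irrelevant since we only use the right-exactness of $R_\sigma \otimes_R -$ on the two-term sequence $P \to Q \to \Coker s \to 0$.
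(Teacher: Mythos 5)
Your proof is correct and follows exactly the same route as the paper: apply the right-exact functor $R_\sigma\otimes_R-$ to $P\st{s}\lrt Q\lrt \Coker s\lrt 0$ and use that $R_\sigma\otimes_R s$ is an isomorphism because $\phi$ is $\sigma$-inverting. Nothing further is needed.
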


\begin{proof}
Consider the exact sequence
\[ R_\sigma\otimes_R P \st{R_\sigma\otimes_Rs}\lrt R_\sigma\otimes_R Q \lrt R_\sigma\otimes_R \Coker s \lrt 0.\]
Since $R_\sigma\otimes_Rs$ is an isomorphism, we get the result.
\end{proof}

\begin{lemma}\label{SiSF IS OSiSF}
Every $\sigma$-strongly flat $R$-module $M$ is optimistically $\sigma$-strongly flat.
\end{lemma}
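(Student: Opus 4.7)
The plan is to verify the three defining conditions of optimistically $\sigma$-strongly flat modules in turn. Let $M$ be a $\sigma$-strongly flat $R$-module. First, flatness of $M$ is immediate from Remark \ref{Flat}, since we are working under the standing assumption that $\sigma$ admits a calculus of left fractions. Second, projectivity of $R_\sigma \otimes_R M$ over $R_\sigma$ is precisely the content of Lemma \ref{Lemma 3.1[BP]}. So only the third condition requires real work: for every $s\in\sigma$, I must show that $\Coker s \otimes_R M$ belongs to $\Add(\Coker s)$.

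For this, I would exploit the structural description of $\sigma$-strongly flat modules recalled in paragraph \ref{2.9}: up to summand, $M$ sits inside a short exact sequence
\[0 \lrt R^{(\alpha)} \lrt G \lrt R_\sigma^{(\beta)} \lrt 0,\]
with $M$ a direct summand of $G$. Fix $s\in\sigma$ and tensor this sequence on the right with $\Coker s$. Because $\sigma$ admits a calculus of left fractions, $R_\sigma$ (hence $R_\sigma^{(\beta)}$) is flat, so $\Tor_1^R(R_\sigma^{(\beta)},\Coker s)=0$. Combining this with the previous lemma, which yields $R_\sigma \otimes_R \Coker s = 0$ and therefore $R_\sigma^{(\beta)}\otimes_R\Coker s = 0$, the long exact Tor sequence collapses to an isomorphism
\[\Coker s \otimes_R G \ \cong \ R^{(\alpha)}\otimes_R \Coker s \ \cong \ (\Coker s)^{(\alpha)}.\]

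Since $M$ is a direct summand of $G$, the module $\Coker s \otimes_R M$ is a direct summand of $\Coker s \otimes_R G \cong (\Coker s)^{(\alpha)}$, and hence lies in $\Add(\Coker s)$, as required.

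The argument is essentially routine once the structural description \ref{2.9} is in hand; the only mildly delicate point is making sure that the Tor-vanishing $\Tor_1^R(R_\sigma^{(\beta)},\Coker s)=0$ is available, which is exactly where the calculus-of-left-fractions hypothesis (giving flatness of $R_\sigma$) enters. No additional assumption on the projective dimension of $R_\sigma$ is needed for this lemma.
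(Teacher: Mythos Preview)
Your proof is correct and follows essentially the same route as the paper's: verify flatness via Remark \ref{Flat}, projectivity of $R_\sigma\otimes_R M$ via Lemma \ref{Lemma 3.1[BP]}, then tensor the structural short exact sequence from \ref{2.9} with $\Coker s$ and use $R_\sigma\otimes_R\Coker s=0$ to conclude $\Coker s\otimes_R G\cong(\Coker s)^{(\alpha)}$. The only cosmetic difference is that the paper phrases the exactness of the tensored sequence via purity (the sequence is pure since $R_\sigma^{(\beta)}$ is flat), whereas you spell out the equivalent $\Tor_1$-vanishing directly.
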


\begin{proof}
First, since $\sigma$ admits a calculus of left fractions, by Remark \ref{Flat}, $M$ is flat. By Lemma \ref{Lemma 3.1[BP]}, $R_\sigma\otimes_R M$ is a projective $R_\sigma$-module.  Consider the pure exact sequence
\[0\lrt R^{(\alpha)}\lrt G\lrt R_{\sigma}^{(\beta)}\lrt 0,\]
where $M$ is a summand of $G$. Let $s\in \sigma$. By tensoring  $\Coker s$ to the above short exact sequence, we obtain the exact sequence
\[0\lrt \Coker s\otimes_R R^{(\alpha)}\lrt \Coker s\otimes_R G\lrt \Coker s\otimes_R  R_{\sigma}^{(\beta)}\lrt 0.\]
Since $\Coker s\otimes_R  R_{\sigma}^{(\beta)}=0$, it follows that  $\Coker s\otimes_R G\cong (\Coker s) ^{(\alpha)}$. Hence, $\Coker s\otimes_R M\in \Add(\Coker s)$.
\end{proof}

\begin{remark}
Similar to \cite{N2}, we can see that the natural inclusion $ e: \K(R\Prj) \lrt \K(\SiOSF) $  has a right adjoint $ e^*: \K(\SiOSF) \lrt \K(R\Prj) $, which is also a Verdier  quotient. We denote the kernel of the functor $e^*$ by $\K(R\Prj)^{\perp}$, where orthogonal is taken in $\K(\SiOSF)$. Therefore,
$\K(R\Prj)$  is equivalent to $\K(\SiOSF)/\K(R\Prj)^{\perp}$.
\end{remark}

In the following theorem, we study the subcategory $\K(R\Prj)^{\perp}$ of $\K(\SiOSF)$.

\begin{theorem}\label{Characterization-II}
Let $\sigma$ be a set of maps between finitely generated projective $R$-modules such that for every $s\in \sigma$, $\Coker s$ is a projective $R$-module. Let $Z$ be an object of $\K(\SiOSF)$. The following are equivalent.
\begin{itemize}
\item[$(i)$] $Z$ lies in the subcategory  $\K(R\Prj)^{\perp}$ of $\K(\SiOSF)$.
\item[$(ii)$] $Z$ is an acyclic complex of optimistically $\sigma$-strongly flat $R$-modules
\[\cdots\st{\partial^{i-1}}\lrt Z^{i-1}\st{\partial^{i}}\lrt Z^{i}\st{\partial^{i+1}}\lrt Z^{i+1}\lrt \cdots\]
such that the kernels $K^i$ of the maps $\partial^i$ are all optimistically $\sigma$-strongly flat $R$-modules.
\end{itemize}
\end{theorem}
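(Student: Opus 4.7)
The plan is to follow the blueprint of Theorem \ref{Characterization-I}, but with the projective dimension hypotheses replaced by the assumption that every $\Coker s$ is a projective $R$-module. The direction $(ii) \Rightarrow (i)$ I would dispatch first by observing that $\SiOSF \subset R\Flat$ implies $Z \in \K(R\Flat)$, and morphisms in $\K(\SiOSF)$ coincide with morphisms in $\K(R\Mod)$ on the nose, so the argument for $(iii) \Rightarrow (i)$ of \cite[Theorem 8.6]{N2} applies verbatim: any chain map from a complex of projective $R$-modules into an acyclic complex of flats with flat kernels is null-homotopic.

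For the main direction $(i) \Rightarrow (ii)$, since $Z \in \K(R\Flat)$, Neeman's \cite[Theorem 8.6]{N2} delivers that $Z$ is acyclic with flat kernels $K^i$. It remains to upgrade the flatness of $K^i$ to optimistic $\sigma$-strong flatness, i.e.\ to verify (a) $R_\sigma \otimes_R K^i$ is a projective $R_\sigma$-module, and (b) $\Coker s \otimes_R K^i \in \Add(\Coker s)$ for every $s \in \sigma$. Both (a) and (b) will be handled by tensoring the acyclic complex $Z$ with an appropriate flat module and invoking Remark \ref{Remark 2.15 Neeman}.

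For (a), tensoring $Z$ with the flat $R$-module $R_\sigma$ produces an acyclic complex whose terms $R_\sigma \otimes_R Z^i$ are projective $R_\sigma$-modules by the definition of $\SiOSF$, and whose kernels $R_\sigma \otimes_R K^i$ are flat $R_\sigma$-modules (via the base-change identity $N \otimes_{R_\sigma}(R_\sigma \otimes_R K^i) \cong N \otimes_R K^i$, which is exact in $N$ by flatness of $K^i$ over $R$). Remark \ref{Remark 2.15 Neeman} then forces the complex to be contractible, so its kernels are direct summands of projective $R_\sigma$-modules. For (b), tensoring $Z$ with $\Coker s$ (flat, since it is projective by hypothesis) yields an acyclic complex whose terms lie in $\Add(\Coker s)$ and are therefore projective over $R$, while the kernels $\Coker s \otimes_R K^i$ remain flat. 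Remark \ref{Remark 2.15 Neeman} again gives contractibility, so each $\Coker s \otimes_R K^i$ is a direct summand of $\Coker s \otimes_R Z^i \in \Add(\Coker s)$, and closure of $\Add(\Coker s)$ under direct summands finishes the argument.

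The principal obstacle is the second step, (b): it is exactly the hypothesis that $\Coker s$ is projective that converts $\Coker s \otimes_R Z^i \in \Add(\Coker s)$ into a complex of projective $R$-modules and thereby unlocks Remark \ref{Remark 2.15 Neeman}. Without it one would only know that the tensored complex is acyclic with flat kernels in $\Add(\Coker s)$, which is insufficient to produce contractibility and hence insufficient to pull the $\Add(\Coker s)$ condition back to the kernels $K^i$.
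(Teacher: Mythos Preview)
Your proposal is correct and follows essentially the same route as the paper's proof: both directions are reduced to \cite[Theorem 8.6]{N2}, and for $(i)\Rightarrow(ii)$ the two missing conditions on each $K^i$ are verified by tensoring the acyclic complex $Z$ first with $R_\sigma$ and then with $\Coker s$, invoking Remark \ref{Remark 2.15 Neeman} in each case to obtain contractibility. You have in fact made explicit a couple of details (flatness of the tensored kernels, closure of $\Add(\Coker s)$ under summands) that the paper leaves implicit.
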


\begin{proof}
The proof is similar to the proof of Theorem \ref{Characterization-I}, so we just sketch the proof of $(i)\Rightarrow (ii)$. Since $\K(\SiOSF) \subset \K(R\Flat)$, by \cite[Theorem 8.6]{N2}, the complex $Z $ is an acyclic complex of optimistically  $\sigma$-strongly flat $ R$-modules,
\[\cdots\st{\partial^{i-1}}\lrt Z^{i-1}\st{\partial^{i}}\lrt Z^{i}\st{\partial^{i+1}}\lrt Z^{i+1}\lrt \cdots\]
where the kernels $K^i$ of the maps $\partial^i$ are all flat $R$-modules. Since the module $Z^i$ for each $i$, is optimistically $\sigma$-strongly flat, $R_\sigma\otimes Z^i$ is a projective $R_\sigma$-module. Therefore, by Remark \ref{Remark 2.15 Neeman}, for each $i$, $R_\sigma\otimes_R K^i$, is a projective $R_\sigma$-module.  By a similar argument and using the fact that $\Coker s$, for every $s\in \sigma$, is projective, we obtain that $\Coker s\otimes_R K^i$, for each $i$, is in $\Add(\Coker s)$. Hence, all kernels $K^i$ are optimistically $\sigma$-strongly flat.
\end{proof}

\subsection{On the existence of $\sS_\sigma$-precover}\label{Subsec 3.1}
Let $R$ be an arbitrary ring. Let $\sS_\sigma$ denote the thick triangulated subcategory of $\K(\SiSF)$ consisting of acyclic complexes $Z$ of $\sigma$-strongly flat $R$-modules
\[\begin{tikzcd}
 &\cdots\rar{\partial^{i-2}}& Z^{i-1}\rar{\partial^{i-1}}&Z^i\rar{\partial^i}& Z^{i+1} \rar{\partial^{i+1}}& \cdots
	\end{tikzcd}\]
in which all syzygies are $\sigma$-strongly flat $R$-modules.

In this subsection, we demonstrate that every object in the category $\K(R\Mod)$ has an $\sS_\sigma$-precover. To do this, we employ the method described in \cite{EEI}. First, we recall an important proposition. For reference, see \cite[Proposition 1.4]{N3} or \cite[Proposition 3]{Kr1}.

\begin{proposition}\label{Prop. 1.4 Neeman}
Let $\ST$ be a triangulated category and $\sS$ be a full triangulated subcategory. Assume that $\ST$ and $\sS$ have split idempotents. Then the following are equivalent.
\begin{itemize}
\item[$(i)$] The inclusion $\sS\lrt \ST$ has a right adjoint.
\item[$(ii)$] Every object $t\in \ST$ admits an $\sS$-precover.
\end{itemize}
\end{proposition}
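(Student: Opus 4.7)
The two implications are of very different character; the first is formal, the second is the substance of the statement.

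Direction $(i) \Rightarrow (ii)$ is immediate. Suppose $r : \ST \to \sS$ is right adjoint to the inclusion $\iota : \sS \hookrightarrow \ST$. For each $t \in \ST$ the counit $\varepsilon_t : \iota r(t) \to t$ is an $\sS$-precover: given any $s \in \sS$ and $f : s \to t$, the adjunction produces a unique $\tilde f : s \to r(t)$ with $f = \varepsilon_t \circ \iota(\tilde f)$, which is precisely the factorization required by the precover property.

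For $(ii) \Rightarrow (i)$ the plan is to upgrade an arbitrary $\sS$-precover into a \emph{reflection}, i.e.\ a morphism $p : s \to t$ inducing a \emph{bijection} $\Hom_\ST(-, s) \cong \Hom_\ST(-, t)$ on $\sS$; the right adjoint is then read off from the objects $s = r(t)$. Fix $t \in \ST$ and an $\sS$-precover $p : s \to t$, completing to a triangle $s \xrightarrow{p} t \xrightarrow{u} c \xrightarrow{\alpha} \Sigma s$. Applying $\Hom_\ST(s^0, -)$ for $s^0 \in \sS$, the precover property translates into injectivity of $\alpha_* : \Hom_\ST(s^0, c) \hookrightarrow \Hom_\ST(s^0, \Sigma s)$; what remains to obtain a reflection is the vanishing of $\Hom_\ST(s^0, \Sigma^{-1} c) \to \Hom_\ST(s^0, s)$ for every $s^0 \in \sS$.

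To achieve this, choose an $\sS$-precover $\beta : s'' \to \Sigma^{-1} c$ and form the composite $\gamma : s'' \xrightarrow{\beta} \Sigma^{-1} c \xrightarrow{\Sigma^{-1}\alpha} s$. By construction $p\gamma = 0$, so if $s'' \xrightarrow{\gamma} s \xrightarrow{\pi} s' \to \Sigma s''$ is a triangle (entirely inside $\sS$, which makes sense because $\sS$ is triangulated), then $p$ factors as $p = p' \pi$ for a morphism $p' : s' \to t$. A routine check — surjectivity of $\Hom(-, s) \twoheadrightarrow \Hom(-, t)$ composed with surjectivity of $\pi_*$ — shows $p'$ is still a precover. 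The crucial additional fact is that $p'$ is a \emph{reflection}: applying the octahedral axiom to $s \xrightarrow{\pi} s' \xrightarrow{p'} t$ identifies the cone $c'$ of $p'$ via a triangle $\Sigma s'' \to c \to c' \to \Sigma^2 s''$, and after a shift this reads $\Sigma^{-1} c' \to s'' \xrightarrow{\beta} \Sigma^{-1} c$; the precover property of $\beta$ then forces $\Hom_\ST(s^0, \Sigma^{-1} c') \to \Hom_\ST(s^0, s')$ to vanish via a diagram chase in the long exact sequence, as desired.

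Given the reflections $p'_t : r(t) \to t$ the functoriality of $r$ is automatic: for $f : t \to t'$ the composite $f \circ p'_t : r(t) \to t'$ has a \emph{unique} lift $r(f) : r(t) \to r(t')$ because $p'_{t'}$ is a reflection, and uniqueness forces $r$ to preserve compositions and identities. The split-idempotent hypothesis enters at two distinct moments: once to guarantee that retracts arising from the octahedral construction actually lie in $\sS$ (so that the triangulated subcategory is thick in the strong sense required), and once to absorb the freedom in the choice of $\beta$, which manifests itself as an idempotent in $\End_\sS(s')$; splitting it picks out a canonical direct summand that is independent of the choices, making $r$ well-defined up to canonical isomorphism. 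The main obstacle I expect is this last verification: orchestrating the octahedral axiom so that the upgraded $p'$ really is a reflection rather than merely a slightly better precover, and then showing that the residual indeterminacy is captured by a splittable idempotent — after which the machine of Proposition~\ref{Prop. 1.4 Neeman} runs cleanly.
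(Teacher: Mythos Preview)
The paper does not prove this proposition; it is quoted with references to \cite[Proposition 1.4]{N3} and \cite[Proposition 3]{Kr1}. Your $(i)\Rightarrow(ii)$ is correct. For $(ii)\Rightarrow(i)$, the strategy---precover $\Sigma^{-1}c$, apply the octahedral axiom to obtain $p':s'\to t$ with $c'\simeq\Sigma\,\cone(\beta)$---is in the spirit of the cited proofs, but the key claim that $p'$ is automatically a reflection is false as stated. Concretely: take $t=0$, any $0\neq s\in\sS$, and $p:s\to 0$ (trivially a precover, with $\Sigma^{-1}c\cong s$); for $\beta$ choose the non-minimal precover $(1,0):s\oplus s\to s$. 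Then $\gamma=\pm\beta$, $s'=\cone(\gamma)\cong\Sigma s$, $p'$ is the zero map $\Sigma s\to 0$, and $c'\cong\Sigma^2 s\in\sS$, which is not in $\sS^{\perp}$. The precover property of $\beta$ yields only the injection $\Hom(s^0,\Sigma^{-1}c')\hookrightarrow\Hom(s^0,\Sigma s'')$; no diagram chase in the long exact sequences forces this to vanish. (Also, your shifted triangle should read $s''\to\Sigma^{-1}c\to\Sigma^{-1}c'$, not $\Sigma^{-1}c'\to s''\to\Sigma^{-1}c$.)

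This also shows your account of where split idempotents enter is off: no idempotent on $s'$ is produced by ``the freedom in the choice of $\beta$'', and thickness of $\sS$ is not at issue since every object appearing is already a cone of a map in $\sS$. What is genuinely missing is a mechanism to discard the redundant summand of the precover. The standard route is to pass to a \emph{right-minimal} precover---this is exactly where split idempotents are needed---and then run a Wakamatsu-type argument; equivalently, one factors $p'$ back through the precover $p$ to get $j:s'\to s$ with $pj=p'$ and analyses the endomorphism $j\pi$ of $s$ satisfying $p(j\pi)=p$. Your sketch does not supply this step.
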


\begin{lemma}\label{InclusionHasAdjoint}
The inclusion $\sS_\sigma\lrt \K(R\Mod)$ has a right adjoint. In particular, the inclusion $\sS_\sigma\lrt\K(\SiSF)$ has a right adjoint.
\end{lemma}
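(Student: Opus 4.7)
The strategy is to invoke Proposition~\ref{Prop. 1.4 Neeman}. Both $\K(R\Mod)$ and its thick subcategory $\sS_\sigma$ are idempotent complete, so the adjunction statement reduces to the claim that every complex $X \in \K(R\Mod)$ admits an $\sS_\sigma$-precover. The ``in particular'' clause will follow automatically by restricting such a precover to the case of targets in $\K(\SiSF)$, since $\sS_\sigma \subset \K(\SiSF) \subset \K(R\Mod)$.

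To produce these precovers I would adapt the blueprint of Neeman~\cite{N3}, where the analogous statement is proved for pure acyclic complexes of flat modules. The module-level input is the complete cotorsion pair $(\SiSF, \SiWC)$, generated by the single module $R_\sigma$. Combined with the structural description in \ref{2.9} (each $\sigma$-strongly flat module is a summand of an extension of a free $R$-module by a free $R_\sigma$-module), an Eklof--Trlifaj argument yields an infinite cardinal $\kappa$ such that $\SiSF$ is deconstructible by a set of modules of cardinality at most $\kappa$. This deconstructibility survives the passage to the category of chain complexes, either in the Gillespie style or, more concretely, via the auxiliary-ring $\Lambda$ trick of \cite[\S 2]{N3} that the paper plans to employ in its final section; the outcome is a complete cotorsion pair on $\Ch(R\Mod)$ whose left class is exactly the chain-level model of $\sS_\sigma$, namely acyclic complexes of $\sigma$-strongly flat modules with all syzygies in $\SiSF$.

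Given this cotorsion pair, for any $X \in \Ch(R\Mod)$ the special precover yields a short exact sequence of complexes $0 \to Y \to Z \to X \to 0$ with $Z \in \sS_\sigma$ and $Y$ in the complementary right class. Composing with the quotient $\Ch(R\Mod) \rt \K(R\Mod)$ gives the candidate $\sS_\sigma$-precover $Z \to X$. The main obstacle will be promoting this from a precover in $\Ch(R\Mod)$ to one in $\K(R\Mod)$: given $Z' \in \sS_\sigma$ and a chain map $g : Z' \to X$, we must factor $g$ through $Z \to X$ \emph{up to homotopy}, whereas the cotorsion-pair machinery a priori only supplies strict chain-level lifts. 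The resolution uses the $\Ext^1_{\Ch}$-vanishing built into the lifted cotorsion pair between its left half (containing $Z'$) and its right half (containing $Y$), which converts chain lifts against the monomorphism $Y \hookrightarrow Z$ into the required homotopy factorisations, precisely in the spirit of \cite[\S 3]{N3}.
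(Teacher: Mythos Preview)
Your approach is correct in spirit but takes a considerably longer route than the paper. The paper's proof is essentially two citations: since $(\SiSF,\SiWC)$ is a complete cotorsion pair \emph{generated by a set} (namely $\{R_\sigma\}$), Theorem~3.1 of \cite{EEI} lifts it to a complete cotorsion pair $(\widetilde{\SiSF},\,dg\widetilde{\SiWC})$ in $\C(R\Mod)$, and then Theorem~3.5 of \cite{EBIJR} gives the right adjoint to $\K(\widetilde{\SiSF})\hookrightarrow\K(R\Mod)$ directly from such a lifted cotorsion pair. The identification $\K(\widetilde{\SiSF})=\sS_\sigma$ finishes the argument.

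Your proposal instead goes through Proposition~\ref{Prop. 1.4 Neeman} in the harder direction: you aim to build $\sS_\sigma$-precovers by hand and then deduce the adjoint. The deconstructibility discussion and the chain-level special-precover-plus-$\Ext^1$-vanishing argument you sketch are exactly what the proofs of the two cited theorems contain, so you are effectively rederiving \cite{EEI} and \cite{EBIJR} rather than invoking them. This is not wrong, but it inverts the logical flow of the paper: here the adjoint is obtained first, and precovers are then \emph{deduced} from it in Theorem~\ref{SsigmaIsPrecovering} via Proposition~\ref{Prop. 1.4 Neeman}. One further remark: your suggestion to use the auxiliary ring of \cite[\S 2]{N3} for this lemma does not match the paper---that machinery appears only in Section~\ref{Sec 4}, for the separate purpose of producing objects of $\sS_\sigma$ from $\Sigma$-strongly flat $T$-modules, not for establishing the adjoint.
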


\begin{proof}
Since  $(\SiSF, \SiWC)$ is a complete cotorsion pair generated by $R_\sigma$ in $R\Mod$,  Theorem 3.1 of \cite{EEI} implies that $(\widetilde{\SiSF}, dg \widetilde{\SiWC})$ is a complete cotorsion pair in $\C(R\Mod)$, where $\widetilde{\SiSF}$ is the class of all acyclic complexes of $R$-modules with all syzygies in $\SiSF$ and $dg \widetilde{\SiWC}$ is the class of all complexes $X$ of $R$-modules such that all terms are in $\SiWC$ and the Hom complex $\homf(X, Y)$ is acyclic whenever $Y\in \widetilde{\SiSF}$. Hence, Theorem 3.5 of \cite{EBIJR} implies that the embedding \[\K(\widetilde{\SiSF})\lrt \K(R\Mod)\]  has a right adjoint.
Finally, note that $\K(\widetilde{\SiSF})=\sS_\sigma$.
\end{proof}

\begin{theorem}\label{SsigmaIsPrecovering}
Every object in the category $\K(R\Mod)$  has an $\sS_\sigma$-precover. In particular, every object in the category $\K(\SiSF)$ has an $\sS_\sigma$-precover.
\end{theorem}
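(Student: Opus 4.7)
The plan is to observe that this theorem is an essentially immediate consequence of Lemma \ref{InclusionHasAdjoint} together with Proposition \ref{Prop. 1.4 Neeman}, so the main task is to verify that the hypotheses of Proposition \ref{Prop. 1.4 Neeman} are satisfied and then invoke it.

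First I would take $\ST = \K(R\Mod)$ and $\sS = \sS_\sigma$. To apply Proposition \ref{Prop. 1.4 Neeman}, I need both categories to have split idempotents. For $\K(R\Mod)$, this is a standard fact: the homotopy category of an additive category with countable coproducts has split idempotents (by the classical Eilenberg swindle / Bökstedt--Neeman argument), and $R\Mod$ certainly has countable coproducts. For $\sS_\sigma$, the definition explicitly declares it to be a thick triangulated subcategory of $\K(\SiSF)$; thickness includes closure under direct summands, so idempotents split in $\sS_\sigma$ as well. I would also briefly remark that $\sS_\sigma$ is genuinely a full triangulated subcategory, as is clear from its description.

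With these hypotheses in place, Lemma \ref{InclusionHasAdjoint} supplies a right adjoint to the inclusion $\sS_\sigma \hookrightarrow \K(R\Mod)$, so Proposition \ref{Prop. 1.4 Neeman} yields the existence of an $\sS_\sigma$-precover for every object of $\K(R\Mod)$. For the ``in particular'' statement I would repeat the same argument with $\ST = \K(\SiSF)$: this category also has split idempotents (again because $\SiSF$ is closed under countable coproducts, being the left class of a cotorsion pair), and Lemma \ref{InclusionHasAdjoint} gives the right adjoint of the inclusion $\sS_\sigma \hookrightarrow \K(\SiSF)$, so Proposition \ref{Prop. 1.4 Neeman} again applies.

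There is no real obstacle beyond the bookkeeping of hypotheses; the essential content has already been packaged into Lemma \ref{InclusionHasAdjoint}. The only point that might need a sentence of justification is the split-idempotent property of $\K(R\Mod)$ and $\K(\SiSF)$, but this is standard and can be dispatched with a citation.
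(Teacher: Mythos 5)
Your proposal is correct and follows essentially the same route as the paper: verify split idempotents for the relevant categories and then combine Lemma \ref{InclusionHasAdjoint} with Proposition \ref{Prop. 1.4 Neeman} (the paper justifies split idempotents via the existence of coproducts and \cite[Proposition 1.6.8]{N1}, while you use the equivalent standard argument plus thickness of $\sS_\sigma$, a negligible difference).
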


\begin{proof}
The categories $\K(R\Mod)$ and $\sS_\sigma$ both  have coproducts. Therefore, \cite[Proposition 1.6.8]{N1} implies that idempotents split. Now, Lemma \ref{InclusionHasAdjoint} together with Proposition \ref{Prop. 1.4 Neeman} provides the result.
\end{proof}

\begin{remark}
$(i)$ With the assumptions in Theorem \ref{Characterization-I}, the projection $\K(\SiSF)$ to its Verdier quotient
$\K(\SiSF)/\K(R\Prj)^{\perp}$, has a right adjoint. Indeed, in this case, we have $\sS_\sigma=\K(R\Prj)^{\perp}$. Therefore, by Lemma \ref{InclusionHasAdjoint}, the  inclusion in the sequence
\[ \xymatrix{\sS_\sigma\ar@{^{(}->}[r] &\K(\SiSF)\ar@{->>}[r] &\K(\SiSF)/\sS_\sigma=\K(R\Prj)} \]
has a right adjoint. Consequently, the projection also possesses a right adjoint, as stated in \cite[Proposition 9.1.18]{N1}. Moreover,  since any adjoint, right or left, of a Verdier quotient is full and faithful, so we obtain a new fully faithful embedding of $\K(R\Prj)$ into $\K(\SiSF)$.

$(ii)$ With the assumptions in Theorem \ref{Characterization-II} and by the similar argument as stated in $(i)$, the natural projection $\K(\SiOSF)$ to its Verdier quotient $\K(\SiOSF)/\K(R\Prj)^{\perp}$, has a right adjoint. Therefore, there exists a new fully faithful embedding $\K(R\Prj)$ into $\K(\SiOSF)$.
\end{remark}

\section{Neeman's auxiliary algebra}\label{Sec 4}
Neeman, in \cite{N3}, showed that the quotient map from $\K(R\Flat)$ to $\K(R\Flat)/\mathscr{S}$ always has a right adjoint, where $\sS$ is the thick subcategory of $\K(R\Flat)$ consisting of all pure acyclic complexes. To this end, he first showed that the inclusion $\sS$ into $\K(R\Flat)$ has a right adjoint. To show the existence of this adjoint, in view of Proposition \ref{Prop. 1.4 Neeman}, he showed that every object in $\K(R\Flat)$ admits an $\sS$-precover. And to constitute an $\sS$-precover for an object $Z$ in $\K(R\Flat)$, he considered an auxiliary ring, see \cite[\S 2]{N3}.

In this section, we use this auxiliary ring to construct objects in $\sS_{\sigma}$, where $\sS_\sigma$ is the class of all acyclic complexes with all syzygies $\sigma$-strongly flat, which is defined in Subsection \ref{Subsec 3.1}.

Let us first review Neeman's construction as presented in \cite{N3}. Let $R$ be a ring and
\[\begin{tikzcd}
 & \cdots \ar{r}{\partial^{i-2}} &\cdot^{i-1}
\ar{r}{\partial^{i-1}}& \cdot^i\ar{r}{\partial^{i}}& \cdot^{i+1}\ar{r}{\partial^{i+1}}& \cdot^{i+2}\ar{r}{\partial^{i+2}}& \cdots
	\end{tikzcd}\]
be a quiver with relations $\partial^{i+1}\partial^i=0$, for all $i \in \Z$. Let $T=T(R)$ be the free $R$-module with basis $\lbrace 1, \partial^i, e^j\rbrace$, with $i, j\in\mathbb{Z}$. For further details on the $R$-algebra structure of $T$, please refer to \cite[\S 2]{N3}.

Let $\inc$ be the inclusion functor
\[\inc: \C(R\Mod )\lrt T\Mod \]
such that takes a complex $Z$ to $\bigoplus_{i=-\infty}^{\infty} Z^i$ with the natural $T$-module structure. This functor is fully faithful and has a right adjoint, which is denoted  by
\[C: T\Mod \lrt \C(R\Mod).\]
The functor $C$ takes the $T$-module $M$ to the complex
\[\begin{tikzcd}
 &\cdots\rar{\partial^{i-2}}& e^{i-1}M\rar{\partial^{i-1}}&e^{i}M\rar{\partial^i}& e^{i+1}M \rar{\partial^{i+1}}& \cdots.
	\end{tikzcd}\]

In the following, we recall some properties of the functors $\inc$ and $C$ from \cite[\S 2]{N3}.

\begin{facts}\label{Facts}
The following holds true.
\begin{itemize}
\item[$(i)$] If $Z$ is a contractible complex of projective $R$-modules, then $\inc(Z)$ is a projective $T$-module. In particular, if $Z$ is the complex
\[\overline{R}[i]:~~~\cdots\lrt 0\lrt R\st{1}\lrt R \lrt0\lrt \cdots\]
with the nonzero modules in degrees $i$ and $i+1$, then the projective module $T e^i$ agrees with $\inc(Z)$.
\item[$(ii)$] If $Z$ is an acyclic complex of flat $R$-modules with all syzygies flat, then $\inc(Z)$ is a flat $T$-module.
\item[$(iii)$]The functor $C$ is exact and preserves colimits.
\item[$(iv)$] If $P$ is a projective $T$-module, then the complex $C(P)$ is a contractible complex of projective $R$-modules. In particular, $C(T)$ is the coproduct of all suspensions of the complex
\[\overline{R}:~~~\cdots\lrt 0\lrt R\st{1}\lrt R \lrt0\lrt \cdots.\]
\item[$(v)$] If $F$ is a flat $T$-module, then  the complex $C(F)$ is an acyclic complex of  flat $R$-modules with all kernels flat $R$-modules.
\end{itemize}
\end{facts}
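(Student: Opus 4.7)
The plan is to dispatch items $(i)$ and $(iii)$ first, as the remaining items will reduce to them. For $(i)$, I would unwind the $R$-algebra relations of $T$: the orthogonality of the $e^j$, together with $e^{j+1}\partial^j = \partial^j = \partial^j e^j$ and $\partial^{j+1}\partial^j = 0$, forces $Te^i$ to be $R$-free on the two generators $e^i$ and $\partial^i$, sitting in components $i$ and $i+1$ respectively, with differential sending $e^i \mapsto \partial^i$; this identifies $Te^i$ with $\inc(\overline{R}[i])$. A general contractible complex $Z$ of projective $R$-modules splits as a direct sum of disks $\overline{P_j}[j]$; writing each projective $P_j$ as a summand of a free $R$-module and using that $\inc(Z) = \bigoplus_j Z^j$ manifestly preserves direct sums, the $T$-module $\inc(Z)$ is a summand of a coproduct of the $Te^i$'s, hence projective. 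For $(iii)$, it suffices to show that each functor $M \mapsto e^iM$ from $T\Mod$ to $R\Mod$ is exact and preserves colimits, and this follows from the natural isomorphism $e^iM \cong \Hom_T(Te^i, M)$ together with the fact that $Te^i$ is a finitely generated projective (hence compact) $T$-module.

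For $(ii)$, I would invoke the characterization of acyclic complexes of flat $R$-modules with flat syzygies as filtered colimits, in $\C(R\Mod)$, of bounded contractible complexes of finitely generated projective $R$-modules. Applying $(i)$ to each term of the colimit and using that $\inc$ preserves colimits (being left adjoint to $C$), we realize $\inc(Z)$ as a filtered colimit of projective $T$-modules, which is flat by Lazard. For the particular case in $(iv)$, I would compute $e^iT$ directly to see that it is $R$-free on $\{e^i, \partial^{i-1}\}$ with differential killing $\partial^{i-1}$ and sending $e^i \mapsto \partial^i$; collecting over $i$ yields $C(T) = \bigoplus_{i\in\Z}\overline{R}[i]$. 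For a general projective $T$-module $P$, written as a summand of some $T^{(I)}$, part $(iii)$ gives $C(T^{(I)}) = C(T)^{(I)}$, and $C(P)$ is a summand of this contractible complex of free $R$-modules, hence itself contractible with projective terms.

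Finally, $(v)$ follows from Lazard's theorem: any flat $T$-module $F$ is a filtered colimit of finitely generated free $T$-modules, and by $(iii)$ the functor $C$ commutes with this colimit, so $C(F)$ is a filtered colimit of contractible complexes of free $R$-modules. Acyclicity passes to filtered colimits, the terms are filtered colimits of free $R$-modules (hence flat), and the kernels are filtered colimits of projective syzygies (hence flat). The main obstacle will be the use of the pure-acyclic-as-filtered-colimit characterization in $(ii)$; this is nontrivial but standard, going back to Neeman in the references already cited in the excerpt, so no fundamentally new input is required.
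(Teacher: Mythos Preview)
The paper does not prove this statement; it is recorded as ``Facts'' recalled from Neeman's construction in \cite[\S 2]{N3}, so there is no in-paper argument to compare against. Your proof is correct and is in line with how these facts are established in the original source: the identifications $Te^i \cong \inc(\overline{R}[i])$ and $e^iT \cong R\{e^i,\partial^{i-1}\}$ follow directly from the quiver relations as you describe, the isomorphism $e^iM \cong \Hom_T(Te^i,M)$ with $Te^i$ finitely generated projective handles $(iii)$ cleanly, and the Lazard reduction of $(v)$ to $(iv)$ via $(iii)$ is the standard move.

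The only nontrivial external input, which you correctly flag, is in $(ii)$: the characterization of acyclic complexes of flat modules with flat syzygies as filtered colimits in $\C(R\Mod)$ of bounded contractible complexes of finitely generated projectives. This is true and available in the literature surrounding \cite{N2}, so the plan goes through without gaps. One could alternatively bypass this by observing that right $T$-modules admit the same complex-theoretic description, so that tensoring over $T$ with $\inc(Z)$ amounts to forming a total tensor product of complexes; flatness of $\inc(Z)$ then reduces to the statement that $N\otimes_R Z$ is acyclic for every right $R$-module $N$, which is one of the equivalent descriptions of $\mathscr{S}$ in \cite[Facts 2.14]{N2}. Either route is fine.
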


\begin{notation}
Let $\sigma$ be a set of maps between  finitely generated projective $R$-modules, that is, $\sigma\subseteq \Mor(R\prj )$.  We may assume that $\sigma$ contains the identity morphism of each object and if $s, t\in \sigma$ then $s\oplus t\in \sigma$, \cite[\S 2.3]{Kr2}. Let $s: P\lrt Q$ be a morphism in $\sigma$. By $\overline{s}$ we mean the morphism between complexes $\overline{P}$ and $\overline{Q}$, that is,
\[\begin{tikzcd}
\overline{P}\dar{\overline{s}}:  &\cdots\rar& 0\rar&P\rar{1}\dar{s}& P \rar\dar{s}& 0\rar& \cdots\\
\overline{Q}: &\cdots\rar& 0\rar&Q\rar{1}& Q \rar& 0\rar& \cdots
	\end{tikzcd}\]
where $\overline{P}$ and $\overline{Q}$ are complexes with nonzero modules in degrees $0$ and $1$. We note that $\overline{s}\subseteq \Mor(\C(R\Mod )\prj)$, where $\C(R\Mod )\prj$ denotes the class of all finitely generated projective objects in $\C(R\Mod)$. Let  $\overline{\sigma}$ denote the set of all  suspensions $\overline{s}[j]$  of the morphisms $\overline{s}$, that is,
\[\overline{\sigma}=\{ \overline{s}[j]~~~| ~~~s\in \sigma, ~j\in \Z\}.\]
Set
\[ \Sigma:=\inc(\overline{\sigma}).\]
Note that Facts \ref{Facts} $(i)$ implies that $\Sigma=\inc(\overline{\sigma})\subseteq \Mor(T\prj )$, where $T\prj$ denote the class of finitely generated $T$-modules. Moreover, we may assume that $\Sigma$ contains the identity morphism of each object in $T\prj$.
\end{notation}

Throughout the section, the sets $\sigma$ and $\Sigma$ are those introduced in the notation above.

\begin{proposition}\label{LF2&LF3 for T}
Let $\sigma\subseteq \Mor(R\prj)$ admit a calculus of left fractions.
\begin{itemize}
\item[$(i)$] Let   $X'\st{\tau}\leftarrow X\st{f}\rt T$ be a pair of morphisms in $T\prj $ such that $\tau\in \Sigma$. Then it can be completed to a commutative diagram
\[\begin{tikzcd}
 &X\rar{f}\dar{\tau} & T\dar[dotted]{\lambda}\\
 &X'\rar[dotted] &Y'
 \end{tikzcd}\]
 such that $\lambda\in \Sigma$.

\item[$(ii)$] Let $f, g: X\lrt T$ be morphisms in $T\prj $. If  there exists morphism $\tau: X'\rt X$ in $\Sigma$ such that $f \tau=g \tau$, then there exists a morphism $\lambda: T\rt Y'$ in $\Sigma$ such that $\lambda f=\lambda g$.
\end{itemize}
\end{proposition}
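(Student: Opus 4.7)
The plan is to verify both assertions by transferring them, via the adjunction $\inc\dashv C$ of Facts \ref{Facts}, to the corresponding statements for $\sigma$ in $R\prj$, which are available by hypothesis. The key structural input is Facts \ref{Facts}$(iv)$, which identifies $C(T)=\bigoplus_{i\in\Z}\overline{R}[i]$ and, consequently, makes each $C(T)^{k}$ a finitely generated free $R$-module (in fact $R^{2}$).

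For part $(i)$, write $\tau=\inc(\overline{s}[k])$ for some $s\colon P\to Q$ in $\sigma$ and $k\in\Z$, so that $X=\inc(\overline{P}[k])$ and $X'=\inc(\overline{Q}[k])$. By adjunction, $f\colon X\to T$ corresponds to a chain map $\tilde{f}\colon\overline{P}[k]\to C(T)$, and because $\overline{P}[k]$ is concentrated in two consecutive degrees with identity differential, $\tilde{f}$ is determined by a single $R$-linear map $\tilde{f}^{k}\colon P\to C(T)^{k}$ living in $R\prj$. The diagram $Q\xleftarrow{s} P\xrightarrow{\tilde{f}^{k}}C(T)^{k}$ in $R\prj$ may then be completed, using condition $(ii)$ of Definition \ref{Def LF} for $\sigma$, by a morphism $t\in\sigma$ together with a map from $Q$ into the codomain of $t$. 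Promoting $t$ to $\overline{\sigma}$ via $\overline{(-)}[k]$, applying $\inc$, and combining with the structural decomposition of $T$ afforded by the idempotents $\{e^{i}\}$ (using identities in $\Sigma$ on the complementary summands) assembles the desired $\lambda\in\Sigma$ and $Y'\in T\prj$, together with the completion $X'\to Y'$.

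Part $(ii)$ follows the same template, now applied to condition $(iii)$ of Definition \ref{Def LF}. The hypothesis $f\tau=g\tau$ translates, under the adjunction, to $\tilde{f}\bar\tau=\tilde{g}\bar\tau$ as chain maps, and in degree $k$ this reads $(\tilde{f}^{k}-\tilde{g}^{k})\circ s=0$ in $R\prj$. Condition $(iii)$ for $\sigma$ then supplies $t\in\sigma$ with $t\circ(\tilde{f}^{k}-\tilde{g}^{k})=0$, and transporting $t$ back through $\overline{(-)}[k]$, $\inc$, and the same assembly step yields the required $\lambda\in\Sigma$ satisfying $\lambda f=\lambda g$.

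The principal obstacle is the assembly step: combining the morphism $\inc(\overline{t}[k])\in\Sigma$, defined on a finitely generated projective summand of $C(T)$, with identities on the remaining summands of $T$ in order to obtain a bona fide morphism $\lambda\colon T\to Y'$ with $Y'\in T\prj$ and $\lambda\in\Sigma$. This requires a careful use of the block structure of $T$ given by the idempotents $\{e^{i}\}$, together with the closure properties of $\Sigma$ --- namely, that $\Sigma$ contains the identity of every object of $T\prj$ and inherits the direct-sum closure of $\sigma$ through $\overline{(-)}[k]$ and $\inc$.
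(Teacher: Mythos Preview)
Your overall strategy---transfer via the adjunction $(\inc,C)$, invoke the left-fraction axioms for $\sigma$ in $R\prj$, then transfer back---is exactly the paper's. The divergence comes at what you yourself flag as ``the principal obstacle,'' and there the gap is genuine rather than a technicality.

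The assembly step cannot be carried out. By definition $\Sigma=\inc(\overline{\sigma})$ together with the identities of $T\prj$, so every non-identity element of $\Sigma$ has domain of the form $\inc(\overline{P'}[j])$ for a \emph{single} shift $j$. But $T$ itself is not in the essential image of $\inc$: the unit $1\in T$ is an independent basis element, not equal to $\sum_{i}e^{i}$, so $T$ is not a $T$-module on which almost all $e^{i}$ act as zero. Consequently the only morphism in $\Sigma$ with domain $T$ is $1_{T}$, and no combination of a nontrivial $\inc(\overline{t}[k])$ with identities on ``complementary summands'' can ever produce a non-identity $\lambda\colon T\to Y'$ lying in $\Sigma$. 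Both ingredients you invoke fail individually as well: the left $T$-module $T$ does not decompose as $\bigoplus_{i}Te^{i}$, and $\Sigma$ is not closed under direct sums taken across different shifts.

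The paper sidesteps this entirely by never attempting to build a morphism out of $T$. After applying $C$, it argues that the chain map $\overline{P}\to C(T)$ may be treated as a map $\overline{P}\to\overline{R}$ into a single summand, applies the fraction axiom to the diagram $Q\xleftarrow{\,\varepsilon\,}P\to R$ in $R\prj$, and then applies $\inc$ to obtain a commutative square whose upper-right vertex is $Te^{0}=\inc(\overline{R})$ rather than $T$, with right-hand arrow $\inc(\overline{\zeta})\in\Sigma$. The chain of isomorphisms at the end of the paper's proof is precisely what is meant to justify replacing $T$ by $Te^{0}$ in the square. Thus the paper's key manoeuvre is to \emph{change the upper-right corner} to a module that genuinely lies in the image of $\inc$, not to decompose $T$ and reassemble.
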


\begin{proof}
$(i)$ Since $\tau\in \Sigma$, without loss of generality, we may assume that $X=\inc(\overline{P})$ and $X'=\inc(\overline{Q})$, where $P, Q\in R\prj $. By applying the functor $C$ on the pair of morphisms $X'\st{\tau}\leftarrow X\st{f}\rt T$, and using the fact that $C\circ \inc = \id$, we obtain a pair of morphisms $\overline{Q}\st{\overline{\varepsilon}}\leftarrow \overline{P}\st{C({f})}\rt C(T)$ in $\C(R\Mod)\prj$, where $\tau=\inc(\overline{\varepsilon})$ and $\varepsilon\in\sigma$.
By using Facts \ref{Facts} $(iv)$ and noting that the complex $\overline{P}$ is nonzero in two degrees, we can easily verify that $C(T)$ agrees with $\overline{R}$.

By visualizing the above pair of morphisms in the nonzero degrees and using the fact that $\sigma$ admits a calculus of left fractions, we obtain the diagram
\[\xymatrix@!0{
& P \ar@{->}[rr]^{}\ar@{->}'[d][dd]& & R \ar@{.>}[dd]^{\zeta}\\
P\ar@{->}[ur]^{1} \ar@{->}[rr]\ar@{->}[dd]^{\varepsilon}& & R \ar@{->}[ur]^{1}\ar@{.>}[dd]\\
& Q \ar@{.>}'[r][rr]& & Y\\
Q\ar@{.>}[rr]^{}\ar@{->}[ur]& & Y \ar@{.>}[ur]^{1}
}\]
such that $\zeta\in\sigma$. Hence, we get the commutative diagram

\[\xymatrix{
 \overline{P} \ar@{->}[r]\ar@{->}[d]^{\overline{\varepsilon}} & \overline{R} \ar@{->}[d]^{\overline{\zeta}}\\
\overline{Q} \ar@{->}[r]&  \overline{Y} \\
}\]
in $\C(R\Mod)$.

 Now, by applying the functor $\inc$ on the above diagram, we get the commutative diagram
 \[\xymatrix{
 X=\inc(\overline{P}) \ar@{->}[r]\ar@{->}[d]^{\tau} &  \inc(\overline{R})=Te^0 \ar@{->}[d]^{\inc(\overline{\zeta})}\\
X'=\inc(\overline{Q}) \ar@{->}[r]&  \inc(\overline{Y})
}\]
in $T\Mod$ such that $\inc(\overline{\zeta})\in\Sigma$. To complete the proof, it suffices to demonstrate that the morphism $X\lrt Te^0$ is equal to the initial morphism $f: X\lrt T$. This follows from the following sequence of isomorphisms
\begin{align*}
\Hom_T(\inc(\overline{P}), T)&\cong \Hom_{\C(R\Mod )}(\overline{P}, C(T))\\
&=\Hom_{\C(R\Mod )}(\overline{P}, \overline{R})\\
& \cong \Hom_T(\inc(\overline{P}), \inc(\overline{R}))\\
&=\Hom_T(X, Te^0).
\end{align*}

The first isomorphism arises from the adjoint pair $(\inc, C)$. As in the previous part, the reader can easily verify that the second equality holds. The third equality follows from the fully faithful nature of the functor $\inc$, and the final equality is established by Facts \ref{Facts} (i).

$(ii)$ Assume there exists a morphism $\tau: X'\rt X$ in $\Sigma$ such that $f \tau=g\tau$, where $f$ and $g$  are depicted in the diagram
\[\xymatrix{X'\ar[r]^{\tau}& X\ar @{->} @< 2pt> [r]^{f} \ar @{ ->} @<-2pt> [r]_{g}&T.
}\]
 Since $\tau\in \Sigma$, without loss of generality, we assume that $X'=\inc(\overline{Q})$ and $X=\inc(\overline{P})$, where $Q, P\in R\prj $.
By applying the functor $C$, we get
\[\xymatrix{\overline{Q}\ar[r]^{\overline{\varepsilon}}& \overline{P}\ar @{->} @< 2pt> [r]^{C(f)} \ar @{ ->} @<-2pt> [r]_{C(g)}&C(T)
}\]
in $\C(R\Mod)$, where $\tau=\inc(\overline{\varepsilon})$ and $\varepsilon\in\sigma$.
Similar to the part $(i)$, we may consider $C(T)$ as  $\overline{R}$.  By considering the above diagram of complexes in nonzero degrees and using the fact that $\sigma$  admits a calculus of left fractions, we obtain the following diagram
\[\xymatrix{Q\ar[r]^{\varepsilon}\ar[d]^{1}& {P}\ar@{->} @< 2pt> [r]^{f'} \ar @{ ->} @<-2pt> [r]_{g'}\ar[d]^{1}&R\ar[d]^{1}\ar@{.>}[r]^{\zeta}&Y\ar@{.>}[d]^{1}\\
Q\ar[r]^{\varepsilon}& {P}\ar @{->} @< 2pt> [r]^{f'} \ar @{ ->} @<-2pt> [r]_{g'}&R\ar@{.>}[r]^{\zeta}&Y
}\]
where $\zeta\in \sigma$ and $\zeta f'=\zeta g'$.  By applying  the functor $\inc$ on the above diagram, we get
\[\xymatrix{X'\ar[r]^{\tau}& X \ar @{->} @< 2pt> [r]^{f_1} \ar @{ ->} @<-2pt> [r]_{g_1} & Te^0 \ar@{.>}[r]^{\lambda}&\inc(\overline{Y})}\]
such that $\lambda=\inc(\overline{\zeta})\in \Sigma$ and $\lambda\inc(\overline{f'})=\lambda\inc(\overline{g'})$.
Now the same series of isomorphisms used in the proof of Statement $(i)$, applies to show that $f_1$ and $g_1$ can be replaced by $f$ and $g$, respectively. The proof is hence complete.
\end{proof}

\begin{remark}\label{LF2&LF3 for T^n}
In the statements of the above proposition, if we replace the module $T$ with $T^{(n)}$, we can still obtain the result. Indeed, the category $\Sigma$ is closed under finite coproducts.
\end{remark}

\begin{proposition}\label{LF for Sigma}
If $\sigma\subseteq \Mor(R\prj)$ admits a calculus of left fractions, then so does $\Sigma\subseteq \Mor(T\prj )$.
\end{proposition}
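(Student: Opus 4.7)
The plan is to verify directly the three axioms of Definition \ref{Def LF} for $\Sigma$ regarded as a class of morphisms in the ambient category $T\prj$.

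Condition $(i)$ is the easy part. Identities of every object of $T\prj$ lie in $\Sigma$ by the convention made in the notation paragraph preceding Proposition \ref{LF2&LF3 for T}. Closure of $\Sigma$ under composition is inherited from the parallel property of $\sigma$: since $\sigma$ admits a calculus of left fractions it is closed under composition in $R\prj$, hence $\overline{\sigma}$ is closed under composition in $\C(R\Mod)\prj$, and the functor $\inc$ preserves compositions, so $\Sigma = \inc(\overline{\sigma})$ is closed under composition.

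For conditions $(ii)$ and $(iii)$, the strategy is to bootstrap from Proposition \ref{LF2&LF3 for T} together with Remark \ref{LF2&LF3 for T^n}, which supply the span-completion and equalization properties in the special case where the codomain (or ambient module) is $T^{(n)}$. For a general object $Y\in T\prj$, I write $Y$ as a direct summand of some $T^{(n)}$ via a split monomorphism $\iota: Y\hookrightarrow T^{(n)}$ with retraction $\pi: T^{(n)}\to Y$ satisfying $\pi\iota=1_Y$. Given a span $X'\stackrel{\tau}{\leftarrow} X\stackrel{f}{\rt} Y$ with $\tau\in\Sigma$, I apply Remark \ref{LF2&LF3 for T^n} to the span $X'\stackrel{\tau}{\leftarrow} X\stackrel{\iota f}{\lrt}T^{(n)}$ to produce a commutative square completed by some $\mu:T^{(n)}\rt Z$ in $\Sigma$ and some $g:X'\rt Z$ with $\mu\iota f=g\tau$. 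Condition $(iii)$ is handled symmetrically: given $f,g:X\rt Y$ and $\tau\in\Sigma$ with $f\tau=g\tau$, I apply part $(ii)$ of Proposition \ref{LF2&LF3 for T} (via Remark \ref{LF2&LF3 for T^n}) to $\iota f,\iota g:X\rt T^{(n)}$ to obtain $\mu\in\Sigma$ with $\mu\iota f=\mu\iota g$.

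The main obstacle, and the heart of the proof, is translating these $T^{(n)}$-level completions back into completions in the original variable $Y$ that land again inside $\Sigma$: a priori the composite $\mu\iota:Y\rt Z$ need not belong to $\Sigma$, because $\iota$ itself is not of the form $\inc(\overline{\varepsilon}[j])$. The plan to overcome this is to use that $\sigma$ is closed under finite direct sums and hence $\Sigma$ is closed under finite coproducts, together with the explicit construction of $\mu$ in the proof of Proposition \ref{LF2&LF3 for T}, which assembles the completing morphism coordinate-wise from elements of $\sigma$. Relative to the block decomposition $T^{(n)}=Y\oplus Y''$ induced by $(\iota,\pi)$, this explicit form lets one exhibit the $Y$-block of $\mu$ as itself the image under $\inc$ of a morphism built from $\overline{\sigma}$, and thus as an element of $\Sigma$, yielding the required $\lambda:Y\rt Y'$ in $\Sigma$ completing the original diagram.
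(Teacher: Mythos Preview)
Your overall strategy coincides with the paper's: verify axiom~$(i)$ directly, then reduce axioms~$(ii)$ and~$(iii)$ to the special case $Y=T^{(n)}$ handled by Proposition~\ref{LF2&LF3 for T} and Remark~\ref{LF2&LF3 for T^n}. The paper's proof in fact stops there---it simply asserts that because every object of $T\prj$ is a direct summand of some $T^{(n)}$, it suffices to treat $Y=T^{(n)}$, and invokes those two results. So at the level of strategy you and the paper agree; you have gone further in trying to justify the passage from $T^{(n)}$ down to an arbitrary summand $Y$, a step the paper leaves implicit.

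However, your proposed resolution of that step has a genuine gap. You claim that, using the block decomposition $T^{(n)}=Y\oplus Y''$, the ``$Y$-block'' of the completing morphism $\mu$ can be exhibited as an element of $\Sigma$. But the explicit description of $\mu$ coming out of the proof of Proposition~\ref{LF2&LF3 for T} is coordinate-wise with respect to the \emph{standard} decomposition of $T^{(n)}$ into copies of $T$ (indeed into summands $Te^{j}=\inc(\overline{R}[j])$), not with respect to the arbitrary splitting $Y\oplus Y''$. There is no mechanism that transports this description to the $Y$-summand. More fundamentally, the non-identity members of $\Sigma$ are exactly the maps $\inc(\overline{s}[j])$, whose domains have the very special form $\inc(\overline{P}[j])$; an arbitrary direct summand $Y$ of $T^{(n)}$ need not be of this form, so the only morphism in $\Sigma$ with domain $Y$ may well be $1_{Y}$. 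Your block argument does not produce such a $\lambda$, and as written it does not close the gap you yourself identified.

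In short: the approach matches the paper's, and you are right that the reduction from $T^{(n)}$ to a summand deserves justification; but the justification you sketch does not go through, and this is the point where your argument breaks down.
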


\begin{proof}
According to the definition of  $\Sigma$, it is clear that the identity morphism of each object in $T\prj$ is included in $\Sigma$ and that $\Sigma$ is closed under compositions. Therefore, condition $(i)$ of Definition \ref{Def LF} is satisfied. Since every finitely generated projective $T$-module is a direct summand of $T^{(n)}$, it remains to show that conditions $(ii)$ and $(iii)$ of Definition \ref{Def LF} are fulfilled for $Y = T^{(n)}$. Proposition \ref{LF2&LF3 for T} and Remark \ref{LF2&LF3 for T^n} support this conclusion.
\end{proof}

\begin{proposition}\label{SigmaSF under functor C}
Let $\sigma$ admit a calculus of left fractions. Let $F$ be a $\Sigma$-strongly flat $T$-module. Then the complex $C(F)$ is in $\sS_\sigma$.
\end{proposition}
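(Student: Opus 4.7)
The plan is to push the structure theorem \ref{2.9} for $\Sigma$-strongly flat modules (applied to $T$ and $\Sigma$) through the exact, colimit-preserving functor $C$ in order to read off the required structure on the terms and syzygies of $C(F)$. As a preliminary, Proposition \ref{LF for Sigma} shows that $\Sigma$ admits a calculus of left fractions in $T\prj$, so by Remark \ref{Flat} the module $F$ is flat over $T$; Facts \ref{Facts}~(v) then gives that $C(F)$ is already acyclic with flat syzygies, and it only remains to refine ``flat'' to ``$\sigma$-strongly flat'' for both the terms and the syzygies.

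Next, by \ref{2.9} applied to $T$ and $\Sigma$, the module $F$ is a direct summand of a $T$-module $G$ fitting into
\[0\lrt T^{(\alpha)} \lrt G \lrt T_{\Sigma}^{(\beta)} \lrt 0.\]
Since $\SiSF$ is closed under direct summands, it suffices to prove $C(G)\in \sS_\sigma$. Exactness of $C$ (Facts \ref{Facts}~(iii)) yields a degreewise short exact sequence of complexes
\[0\lrt C(T)^{(\alpha)} \lrt C(G) \lrt C(T_\Sigma)^{(\beta)}\lrt 0.\]
By Facts \ref{Facts}~(iv), $C(T)$ is a coproduct of suspensions of $\overline{R}$ and hence a contractible complex of free $R$-modules; so $C(T)^{(\alpha)}$ is contractible, the sequence splits degreewise, and every term and every syzygy of $C(T)^{(\alpha)}$ is a free $R$-module.

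The crux of the argument is the analogous claim for $C(T_\Sigma)$: namely, that $C(T_\Sigma)$ is a contractible complex whose terms and syzygies are, as $R$-modules, direct sums of copies of $R_\sigma$. Granting this, both the degreewise sequence and its counterpart on syzygies take the shape
\[0\lrt R^{(I)} \lrt (C(G))^j \lrt R_\sigma^{(J)} \lrt 0.\]
Since $R_\sigma$ lies in $\SiSF$ (as the generator of the cotorsion pair $(\SiSF,\SiWC)$), $R$ lies in $\SiSF$, and $\SiSF$ is closed under coproducts and extensions, each $(C(G))^j$ and each syzygy of $C(G)$ is $\sigma$-strongly flat. Passing to the summand $F\subseteq G$ delivers the same conclusion for $C(F)$, so $C(F)\in\sS_\sigma$.

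The main obstacle is the crux claim about $C(T_\Sigma)$. My approach is to combine the adjunction $(\inc,C)$ with Facts \ref{Facts}~(i), which yields for each $j$ the identification
\[e^j T_\Sigma \;\cong\; \Hom_T(Te^j,T_\Sigma) \;\cong\; \Hom_{\C(R\Mod)}(\overline{R}[j],C(T_\Sigma)),\]
together with the observation that, by Proposition \ref{LF for Sigma}, $T_\Sigma$ is a filtered colimit of copies of $T$ along morphisms in $\Sigma=\inc(\overline{\sigma})$. Since $C$ preserves colimits, $C(T_\Sigma)$ is the corresponding filtered colimit of copies of $C(T)=\bigoplus_i \overline{R}[i]$; degreewise this diagram amounts to localizing copies of $R$ at $\sigma$, producing $R_\sigma$ in each term and each syzygy, which is exactly the structure needed to close the proof.
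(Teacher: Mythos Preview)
Your proposal is correct and follows essentially the same route as the paper: use \ref{2.9} for $T$ and $\Sigma$ to trap $F$ as a summand of $G$ in $0\to T^{(\alpha)}\to G\to T_\Sigma^{(\beta)}\to 0$, push this through the exact colimit-preserving functor $C$, identify $C(T)$ as $\bigoplus_i\overline{R}[i]$ and $C(T_\Sigma)\cong \limt C(T)$ as $\bigoplus_i\overline{R_\sigma}[i]$ via the calculus of left fractions for $\Sigma$, and conclude by extension closure of $\SiSF$ on terms and syzygies. The paper argues identically (and is equally brief about the syzygy short exact sequences); the adjunction identification $e^jT_\Sigma\cong\Hom_{\C(R\Mod)}(\overline{R}[j],C(T_\Sigma))$ you record is not actually needed once you invoke colimit-preservation of $C$, and your remark that ``the sequence splits degreewise'' is neither justified nor required, since closure of $\SiSF$ under extensions already suffices.
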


\begin{proof}
First, we note that since $\sigma$ admits a calculus of left fractions, so does $\Sigma$. Consequently, $F$ is a flat $T$-module. Hence, by Facts \ref{Facts} $(v)$, $C(F)$ is an acyclic complex of flat modules with flat syzygies. Since $F$ is $\Sigma$-strongly flat, it is a direct summand of a $T$-module $G$, where $G$ fits into the following short exact sequence
\[0\lrt T^{(n)}\lrt G\lrt T_{\Sigma}^{(m)}\lrt 0.\]
Given that $\Sigma$ admits a calculus of left fractions, we can deduce from \cite[Lemma 1.2.2]{Kr2} that $T_\Sigma$ is isomorphic to a colimit of copies of $T$; that is, we have $T_\Sigma \cong \limt T$.
Since $C$ is an exact functor, it preserves coproducts and colimits. Therefore, we obtain the short exact sequence
\[ 0\lrt C(T)^{(n)}\lrt C(G)\lrt \limt C(T)^{(m)}\lrt 0.\]
It is known that $C(T)$ is the coproduct of all the suspensions of the complex
\[\cdots \lrt 0\lrt R\lrt R\lrt 0\lrt \cdots,\]
and $\limt C(T)$ is the coproduct of all suspensions of the complex
\[\cdots \lrt 0\lrt R_\sigma\lrt R_\sigma \lrt 0\lrt \cdots.\]
This implies that for every $i \in \mathbb{Z}$, there exists an exact sequence
\[0\lrt R^{(n)}\lrt e^i G\lrt R_\sigma^{(m)}\lrt 0. \]
Therefore, for each $i \in \Z$, $e^iG$ is a $\sigma$-strongly flat $R$-module.
It is straightforward to see that all kernels of the maps $e^iG\lrt e^{i+1}G$, are also $\sigma$-strongly flat. Consequently, $C(F)$ is an acyclic complex of $\sigma$-strongly flat $R$-modules with all syzygies also being $\sigma$-strongly flat.
\end{proof}

We end this section by studying inc a little.

\begin{proposition}
Let $\overline{R_\sigma}$ be the complex \[\cdots \lrt 0\lrt R_\sigma\lrt R_\sigma \lrt 0\lrt \cdots\] in $\C(R\Mod )$. Then $\inc(\overline {R_\sigma})$ is a $\Sigma$-strongly flat $T$-module.
\end{proposition}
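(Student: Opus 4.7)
My plan is to identify $\inc(\overline{R_\sigma})$ with the left $T$-module $T_\Sigma e^0$, which is a direct summand of the universal localization $T_\Sigma$ and is therefore automatically $\Sigma$-strongly flat.

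First, I would appeal to Proposition \ref{LF for Sigma}, so that $\Sigma$ admits a calculus of left fractions whenever $\sigma$ does. By Remark \ref{Flatness of Rsigma} applied to both rings, one may then present $R_\sigma \cong \limt R$ as a filtered colimit of copies of $R$ in $R\Mod$, and, in parallel, $T_\Sigma \cong \limt T$ as a filtered colimit of copies of $T$ in $T\Mod$. Since colimits in $\C(R\Mod)$ are computed degreewise, the first presentation lifts to $\overline{R_\sigma} \cong \limt \overline R$.

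Next, I would exploit that $\inc$ is a left adjoint to $C$ and therefore preserves colimits. Combined with Facts \ref{Facts}(i), which identifies $\inc(\overline R)$ with $Te^0$, this yields the chain of isomorphisms
\[
\inc(\overline{R_\sigma}) \,\cong\, \limt \inc(\overline R) \,\cong\, \limt Te^0 \,\cong\, \left(\limt T\right)\!e^0 \,\cong\, T_\Sigma\, e^0,
\]
where the third isomorphism expresses that right multiplication by the idempotent $e^0 \in T$ commutes with the filtered colimit presenting $T_\Sigma$.

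Finally, $T_\Sigma e^0$ is a direct summand of the left $T$-module $T_\Sigma$, cut out by the idempotent $e^0$. The module $T_\Sigma$ itself trivially lies in $\SISF$ as the generator of the cotorsion pair $(\SISF,\SIWC)$, and $\SISF$ is closed under direct summands, being the left class of a complete cotorsion pair. Hence $\inc(\overline{R_\sigma}) \cong T_\Sigma e^0 \in \SISF$, as desired. The step I expect to be the principal obstacle is the identification $\limt Te^0 \cong T_\Sigma e^0$: it requires that the colimit presentation of $T_\Sigma$ respects the right $T$-action, so that right multiplication by $e^0$ can be moved inside the colimit. This follows once one observes that the diagram from Krause's Lemma 1.2.2 may be chosen to be a diagram of $(T,T)$-bimodules, so that its structure maps commute with right multiplication by elements of $T$; any workaround would need to provide the same control over the right $T$-module structure of the colimit.
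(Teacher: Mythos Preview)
Your approach diverges from the paper's and carries both an extra hypothesis and a genuine gap at precisely the point you flag.

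First, the proposition as stated does not assume that $\sigma$ admits a calculus of left fractions; by invoking Proposition~\ref{LF for Sigma} and Remark~\ref{Flatness of Rsigma} you are proving only that special case. The paper's argument needs no such hypothesis.

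Second, your proposed fix for the ``principal obstacle'' does not close it. There are two \emph{different} filtered diagrams in play: the presentation $R_\sigma\cong\colim_I R$ over an index category $I$ determined by $\sigma$, and the presentation $T_\Sigma\cong\colim_J T$ over an index category $J$ determined by $\Sigma$. Transporting the first through $\inc(\overline{\,\cdot\,})$ yields $\inc(\overline{R_\sigma})\cong\colim_I Te^0$, with structure maps $\inc(\overline{s_\alpha})$ inherited from $I$. Arranging Krause's diagram for $T_\Sigma$ to consist of $(T,T)$-bimodule maps only lets you commute $e^0$ past $\colim_J$, giving $T_\Sigma e^0\cong\colim_J Te^0$. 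Nothing you have written links $\colim_I Te^0$ to $\colim_J Te^0$: the index categories and the structure maps are different. To make your route work you would essentially need an identification $T_\Sigma\cong T\otimes_R R_\sigma$ of $T$-bimodules (so that the $I$-diagram, extended to $T$ by $T\otimes_R-$, really computes $T_\Sigma$), and that is a separate statement you have not argued.

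The paper avoids all of this. It does not try to exhibit $\inc(\overline{R_\sigma})$ as a summand of $T_\Sigma$; instead it shows $\inc(\overline{R_\sigma})$ is a $T_\Sigma$-module by the Hom-criterion: full faithfulness of $\inc$ and of $M\mapsto\overline M$ reduces $\Hom_T(\inc(\overline{s}[j]),\inc(\overline{R_\sigma}))$ to $\Hom_R(s,R_\sigma)$, which is invertible because $R_\sigma$ lies in $\SR$. One then uses (from \cite{GT}) that $(T_\Sigma\Mod)^\perp=\SIWC$, whence every $T_\Sigma$-module sits in ${}^\perp\SIWC=\SISF$. This argument is shorter, needs no calculus of fractions, and sidesteps the colimit comparison entirely.
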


\begin{proof}
Similar to the paragraph above Corollary 7.42 of  \cite{GT},  we can see that $(T_\Sigma\Mod )^{\perp}=\SIWC$. Hence, to demonstrate that $\inc(\overline{R_\sigma})$ is a $\Sigma$-strongly flat $T$-module, it suffices to show that $\inc(\overline{R_\sigma})$ is a $T_\Sigma$-module. To establish this, we need to verify that
$\Hom_T(\inc(\overline{s}[j]), \inc(\overline{R_\sigma}))$ is invertible for all $s\in \sigma$. However, for every $s\in \sigma$, the fully faithful nature of the functors  $\inc$ and  $\overline{\iota}: R\lrt \C(R\Mod)$ imply the following isomorphisms
\[\Hom_T(\inc(\overline{s}[j]), \inc(\overline{R_\sigma}))\cong \Hom_{\C(R\Mod)}(\overline{s}[j], \overline{R_\sigma})\cong \Hom_R(s, R_\sigma).\]
Since $\Hom_R(s, R_\sigma)$ is invertible for every $s\in \sigma$, it follows that $\inc(\overline{R_\sigma})$ is indeed a $T_\Sigma$-module.
\end{proof}

\begin{question}
Is it true that for every $Z \in \mathcal{S}_\sigma$, the inclusion $\inc(Z)$ is a $\Sigma$-strongly flat $T$-module?
\end{question}

\section*{Acknowledgments}
We would like to thank Amnon Neeman for suggesting the study of strongly flat modules through Cohn localization and for introducing us to the concept of Cohn localization. Part of this work was conducted during the first author’s visit to the Institut
des Hautes \'{E}tudes Scientifiques (IHES) in Paris, France. He expresses his sincere gratitude for the support
and the enriching academic environment provided by IHES. The research of the third author was supported by a grant from IPM.

\end{document}